\definecolor{webgreen}{rgb}{0,.5,0}
\definecolor{webbrown}{rgb}{.6,0,0}
\def\la{\langle}
\def\ra{\rangle}
\DeclareMathOperator{\het}{ht}
\newcommand{\seqnum}[1]{\href{http://oeis.org/#1}{\underline{#1}}}
\theoremstyle{plain}
\newtheorem{theorem}{Theorem}
\newtheorem{corollary}{Corollary}
\newtheorem{lemma}{Lemma}
\newtheorem{proposition}{Proposition}
\theoremstyle{definition}
\newtheorem{problem}{Problem}
\theoremstyle{remark}
\newtheorem{remark}{Remark}
\def\moverlay{\mathpalette\mov@rlay}
\def\mov@rlay#1#2{\leavevmode\vtop{%
   \baselineskip\z@skip \lineskiplimit-\maxdimen
   \ialign{\hfil$\m@th#1##$\hfil\cr#2\crcr}}}
\newcommand{\charfusion}[3][\mathord]{
    #1{\ifx#1\mathop\vphantom{#2}\fi
        \mathpalette\mov@rlay{#2\cr#3}
      }
    \ifx#1\mathop\expandafter\displaylimits\fi}
\newcommand{\dcup}{\charfusion[\mathbin]{\cup}{\cdot}}
\newcommand{\dbigcup}{\charfusion[\mathop]{\bigcup}{\cdot}}
\newcommand{\lam}{\lambda}
\newcommand{\vep}{\varepsilon}
\newcommand{\sT}{{\mathcal T}}
\newcommand{\sU}{{\mathcal U}}
\newcommand{\sA}{\mathcal{A}}
\newcommand{\sD}{\mathcal{D}}
\newcommand{\sG}{\mathcal{G}}
\newcommand{\ld}{\ldots}
\newcommand{\mymod}{{\mbox{mod}\;}}
\newcommand{\bbZ}{\mathbb{Z}}
\newcommand{\bbN}{\mathbb{N}}
\DeclareMathOperator{\lcm}{lcm}
\begin{document}

\title{Natural exact covering systems and the reversion \\of the M\"obius series}

\date{\today}

\author[1]{I. P. Goulden}
\address[1]{Department of Combinatorics and Optimization, University of Waterloo,
Waterloo, ON  N2L 3G1 Canada}
\email{ipgoulde@uwaterloo.ca}

\author[2]{Andrew Granville}
\address[2]{D\'epartement de math\'ematiques et de statistiques,
Universit\'e de Montr\'eal, CP 6128 Succursale Centre-Ville,
Montr\'eal, QC  H3C 3J7 Canada}
\email{andrew@dms.umontreal.ca}

\author[3]{L. Bruce Richmond} 
\address[3]{Department of Combinatorics and Optimization, University of Waterloo,
Waterloo, ON  N2L 3G1 Canada}
\email{lbrichmond@uwaterloo.ca}

\author[4]{Jeffrey Shallit}
\address[4]{School of Computer Science, University of Waterloo,
Waterloo, ON  N2L 3G1 Canada}
\email{shallit@uwaterloo.ca}

\thanks{The work of IPG and JS was supported by NSERC Discovery Grants.}

\keywords{exact covering system, disjoint covering system,
natural exact covering system,
M\"obius function, congruence, constant gap sequence, generating series, asymptotics}

\subjclass[2010]{Primary  05A15, 11A07, 11A25; Secondary 05A16, 11N37, 68R15}

\begin{abstract}
We prove that the number of natural exact covering systems of cardinality $k$ is equal to the coefficient of $x^k$ in the reversion of the power series~$\sum_{k \ge 1} \mu (k) x^k$, where~$\mu(k)$ is the usual number-theoretic M\"obius function.  Using this result, we deduce an asymptotic expression for the number of such systems.
\end{abstract}

\maketitle

\section{Introduction}
\label{sect1}

A {\it covering system} or {\it complete residue system} is a
collection of finitely many residue classes such that every integer
belongs to at least one of the classes.  The concept was introduced
by Erd\H{o}s \cite{e50,e52}, who used the system of $6$
congruences
\begin{align*}
x &\equiv 0 \pmod 2 &
x & \equiv 0 \pmod 3 \\
x & \equiv 1 \pmod 4 &
x & \equiv 3 \pmod 8 \\
x & \equiv 7 \pmod {12} &
x & \equiv 23 \pmod {24}
\end{align*}
to prove that there exists an infinite arithmetic progression of
odd integers, each of which is not representable
as the sum of a prime and a power of two.  Since 1950, hundreds of papers have been written on covering systems; for surveys of the topic, see, for example \cite{p81,z82,ps02,s05}.

A covering system is called {\em exact} if every integer belongs to one and only one of the
given congruences.  (The system of Erd\H{o}s above is {\em not\/} exact, because the integer $19$ belongs to both residue classes
$3 \pmod {8}$ and $7 \pmod {12}$.)  Exact covering systems, or ECS, are sometimes also called {\em exactly covering systems} or {\em disjoint covering
systems} in the literature (e.g., \cite{n71,f72,f73,nz74,z75,s86}).

Among the exact covering systems, one particular subclass that has received attention consists of the so-called {\em natural exact covering systems}, or NECS.  These are the exact covering systems that can be obtained, starting from the single congruence $x \equiv 0$ (mod $1$), by a finite  number of applications of the following transformation: for some $r\ge 2$,  remove a single congruence $x \equiv a$ (mod $n$) from the system and replace it with the $r$ new congruences $x \equiv a+jn$ (mod $rn$), $j = 0,1, \ldots, r-1$; the value of $r$ can vary for the different applications of this transformation.

The first paper that we could find describing natural exact covering systems is Porubsk{\'y}~\cite{p74}, but he credits Zn{\'a}m with introducing them in an earlier unpublished manuscript. Since then, they have been studied by a number of others, e.g., Burshtein~\cite{b76a,b76b}, Zn{\'a}m~\cite{z82} and Korec~\cite{k84}, but up to now it appears that nobody has enumerated them. Let $a_k$ denote the number of NECS consisting of~$k$ congruences, $k\ge 1$. Define the formal power series
\begin{equation}\label{fpsAM}
A(x) = \sum_{k\ge 1} a_k x^k,\qquad \qquad M(x) = \sum_{k\ge 1} \mu(k) x^k,
\end{equation}
where $\mu$ is the usual number-theoretic {\em M\"obius function} defined by~$\mu(1)=1$ and, for~$n\ge 2$,
$$ \mu(n) = \begin{cases}
0, & \text{ if $n$ is divisible by a square $> 1$;} \\
(-1)^e, & \text{ if $n$ is the product of $e$ distinct primes}.
\end{cases}
$$
The initial terms of these series are given by
\begin{align*}
A(x) &=  x+x^2+3x^3+10x^4+39x^5+160x^6+ 691 x^7 + 3081 x^8 + \ld ,   \\
M(x) &= x-x^2-x^3-x^5+x^6-x^7+x^{10}-x^{11}-x^{13}+x^{15}+\ld,
\end{align*}
where the coefficients in the generating series $A(x)$ were obtained by counting the NECS with at most $8$ congruences.

Our main result is that the series $A$, the generating function for the number of NECS, is the reversion (compositional inverse) of the M\"obius series $M$. Of course, equivalently, this means that $M$ is the reversion of $A$.

\begin{theorem}\label{mainresult}
The series $A(x)$ is the unique solution to the functional equation
\[  M(A(x)) = x, \]
with initial condition $A(0) = 0$.
\end{theorem}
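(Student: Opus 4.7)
The plan is to encode each NECS as a rooted ordered tree (a history of its construction), to show by a generating-function manipulation that a M\"obius-weighted sum over all such trees is the reversion of $M$, and finally to prove by induction on the gcd of the moduli that the weighted count of trees representing any fixed NECS collapses to $1$; the tree generating function will then equal $A(x)$ and the theorem will follow.

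First I would associate with each NECS $S$ of cardinality $k$ its set $\mathcal{T}(S)$ of rooted ordered trees with $k$ leaves in which every internal node has at least two children: an internal node with $r$ children records a split into $r$ residue classes (the children ordered $0, 1, \ldots, r-1$). By the definition of NECS, $\mathcal{T}(S) \neq \emptyset$, but it typically has more than one element; for instance, $\{0,1,2,3 \pmod 4\}$ is represented both by one split of size~$4$ and by two consecutive splits of size~$2$. Setting
\[
F(x) \;=\; \sum_T \Bigl(\prod_{v \text{ internal}} (-\mu(\deg v))\Bigr)\, x^{\#\{\text{leaves}(T)\}},
\]
the standard root decomposition gives $F(x) = x + \sum_{r \geq 2}(-\mu(r)) F(x)^r$, which rearranges to $\sum_{k\ge 1}\mu(k)F(x)^k = x$, i.e.\ $M(F(x))=x$; by uniqueness of reversion, $F$ is the reversion of $M$.

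The main step is to show $\Sigma(S) := \sum_{T \in \mathcal{T}(S)} \prod_v (-\mu(\deg v)) = 1$ for every NECS $S$, since grouping trees by the NECS they represent then yields $F(x) = \sum_S \Sigma(S)\, x^{|S|} = \sum_S x^{|S|} = A(x)$. I would induct on $|S|$, with the base case $|S|=1$ immediate. For $|S| \geq 2$ let $d$ be the gcd of the moduli of $S$: because every split multiplies every modulus in its subtree by $\geq 2$, we have $d \geq 2$, and the root degree of any $T \in \mathcal{T}(S)$ must divide~$d$. For each $r \mid d$ with $r \geq 2$, $S$ decomposes canonically into $r$ sub-NECS $S_0^{(r)}, \ldots, S_{r-1}^{(r)}$ (the restriction to the residue class $i \pmod r$, rescaled by~$r$), and the trees in $\mathcal{T}(S)$ of root degree $r$ are in bijection with $\prod_i \mathcal{T}(S_i^{(r)})$. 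By induction $\Sigma(S_i^{(r)}) = 1$ for each $i$, so
\[
\Sigma(S) \;=\; \sum_{\substack{r \mid d \\ r \geq 2}} (-\mu(r)) \;=\; 1 - \sum_{r \mid d}\mu(r) \;=\; 1,
\]
using $\sum_{r\mid d}\mu(r)=0$ for $d \geq 2$.

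The main obstacle will be the technical lemma underlying the inductive step: for each $r \mid d(S)$ with $r \geq 2$, each restriction $S_i^{(r)}$ must itself be an NECS (not merely an exact covering system), so that the induction applies, and the root-$r$ trees of $S$ must genuinely biject with tuples of trees of the $S_i^{(r)}$. Both reduce to an ``associativity-of-splits'' principle: any construction history of $S$ can be rearranged to begin with a split by any chosen $r$ dividing the gcd of its moduli. Once that is in hand, the remaining verifications are bookkeeping.
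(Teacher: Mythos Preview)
Your proposal is correct, and it shares the same underlying technical lemma as the paper, but the two arguments are organized differently.

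The paper does not introduce a M\"obius-weighted tree count. Instead, it establishes (via the same ``associativity of splits'' you identify as the obstacle, stated there as Proposition~\ref{treedivprop}) a bijection between $\{C\in\sA : n\mid\gcd(C)\}$ and $\sA^n$, yielding $A(x)^n=\sum_{d\ge 1}A_{nd}(x)$ for all $n\ge 1$. M\"obius inversion of this family of identities then gives $M(A(x)^m)=A_m(x)$ for every $m$, and the case $m=1$ is the theorem since $A_1(x)=x$. Your approach instead bakes the M\"obius function into the tree weights, gets $M(F(x))=x$ immediately from the root decomposition, and then uses the divisor identity $\sum_{r\mid d}\mu(r)=0$ pointwise on each NECS to collapse $F$ to $A$. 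Both routes hinge on exactly the same nontrivial fact---that if $S$ is an NECS and $r\mid\gcd(S)$ then each rescaled restriction $S_i^{(r)}$ is again an NECS---and the paper proves this by an explicit tree surgery (Lemma~\ref{ablemma}) combined with induction on tree height. What the paper's organization buys is the stronger family of identities $M(A(x)^m)=A_m(x)$, used later for the gcd-refined asymptotics; what your organization buys is a shorter path to the headline theorem and a pleasant combinatorial interpretation of the reversion coefficients as signed tree counts.
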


\begin{remark}
The coefficients of the reversion of the M\"obius series
are given by sequence \seqnum{A050385} in the
On-Line Encyclopedia of Integer Sequences \cite{sl}.
\label{rmk1}
\end{remark}

\begin{remark}
 We remark that, instead of the M{\"o}bius power series $M(x)$ defined in~\eqref{fpsAM}, it is more usual to study the Dirichlet series~$\sum_{k\ge 1} \mu(k) k^{-s}$. Indeed Hardy and Wright~\cite[p.~257]{hw60} refer to series such as~$M(x)$ as ``extremely difficult to handle''. The generating series~$M(e^{-y})$ was mentioned by Hardy and Littlewood~\cite[p.~122]{hl16}, and its unusual analytic properties were studied by Fr{\"o}berg
\cite{f66}.
\label{rmk2}
\end{remark}

\begin{remark}
A priori, it is not even obvious that the coefficients in the reversion of $M(x)$ are all positive. This fact follows from
our results.
\label{rmk3}
\end{remark}

We have not been able to use Theorem~\ref{mainresult} to determine a useful explicit expression for the number $a_k$ of NECS with~$k$ congruences. However, we are able to determine the precise asymptotic form for~$a_k$, as a corollary to Theorem~\ref{mainresult}.

\begin{theorem}\label{asymp}
Let~$\tau$ be the zero of $M'$ of smallest absolute value in $(-1,1)$, so that $M'(\tau)=0$. Also, let
\[ c= \sqrt{-\frac{M(\tau)}{2\pi M''(\tau)}} , \qquad\qquad\qquad \gamma = M(\tau)^{-1} \;  . \]
Then the $k$th coefficient in the reversion of the M\"obius series (which is also equal to the number of NECS with $k$ congruences) is asymptotically
\[ a_k \sim c \, \gamma^k \, k^{-3/2} . \]
\end{theorem}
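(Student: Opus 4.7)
The plan is a standard singularity analysis of the reversion $A$ provided by Theorem~\ref{mainresult}. Since $|\mu(k)|\le 1$, the series $M$ is analytic in the open unit disk, and $M'(0)=1$ makes $A$ analytic in a neighbourhood of the origin. I would first verify that $\tau$ is real and positive: $M'(0)=1>0$ while a direct estimate (for instance at $x=1/2$) gives $M'(1/2)<0$, so $M'$ has a real zero in $(0,1/2)$; take $\tau$ to be the smallest such. Because $M'$ changes sign simply at $\tau$, one has $M''(\tau)<0$, and $M$ is strictly increasing on $[0,\tau]$ with positive maximum $\rho:=M(\tau)$. By the inverse function theorem, $A$ continues analytically along $[0,\rho)$ with $A'(x)=1/M'(A(x))\to\infty$ as $x\to\rho^-$; combined with Pringsheim's theorem (applicable since $a_k\ge 0$ by Theorem~\ref{mainresult}), this shows the radius of convergence of $A$ equals $\rho=\gamma^{-1}$.

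Expanding $M(\tau+u)=\rho+\tfrac12 M''(\tau)u^2+O(u^3)$ and inverting then yields, as $x\to\rho^-$,
\[ A(x)=\tau - D\sqrt{1-x/\rho}\,\bigl(1+o(1)\bigr),\qquad D=\sqrt{-2M(\tau)/M''(\tau)}. \]
Provided $A$ continues analytically to a $\Delta$-domain at $\rho$ with $\rho$ the only singularity on $|x|=\rho$, the Flajolet--Odlyzko transfer theorem gives $[x^k]\sqrt{1-x/\rho}\sim -\frac{1}{2\sqrt\pi}k^{-3/2}\rho^{-k}$, and hence
\[ a_k\sim \frac{D}{2\sqrt\pi}\,\gamma^k k^{-3/2} = c\,\gamma^k k^{-3/2}, \]
using the identity $D/(2\sqrt\pi)=\sqrt{-M(\tau)/(2\pi M''(\tau))}=c$.

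The main obstacle is supplying the global analytic input required by the transfer theorem. Uniqueness of $\rho$ as a singularity on $|x|=\rho$ should follow from the aperiodicity of the support of $A$ (since $a_1=a_2=1$) together with positivity of the coefficients, via a daffodil-lemma type argument yielding $|A(z)|<A(|z|)$ off the positive real axis. To extend $A$ to a full $\Delta$-domain at $\rho$, one uses that $\tau$ is the unique zero of $M'$ in $\{|y|\le\tau\}$, so $M$ is a local biholomorphism at every other point of that closed disk; the analytic continuation of $A$ across each point of $|x|=\rho$ distinct from $\rho$ itself then follows by local inversion of $M$. In view of the unusual analytic behaviour of $M$ on the unit disk noted in Remark~\ref{rmk2}, verifying these details rigorously requires some care, but once they are in place the coefficient extraction is routine.
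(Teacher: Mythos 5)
Your overall strategy is the same one the paper uses---Theorem~\ref{mainresult} plus singular inversion: the paper rewrites $M(A(x))=x$ as $A(x)=x\,\phi(A(x))$ with $\phi(u)=u/M(u)$ and invokes Theorem VI.6 of \cite{fs09}, which is precisely the square-root-singularity inversion theorem you are sketching, with $\rho=M(\tau)$ and the same constant $c$. The difference is where the real work lies, and that is exactly the part you defer. Your argument hinges on two global facts: (i) $\rho$ is the unique singularity of $A$ on $|x|=\rho$ and $A$ extends to a $\Delta$-domain there, and (ii) $M'$ has no zeros in the closed complex disc $\{|w|\le\tau\}$ other than $\tau$ itself, so that $A$ can be continued past every boundary point $z\ne\rho$ by locally inverting $M$ at $A(z)$. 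You assert (ii) without proof, and it does not follow from the theorem's hypothesis, which only identifies $\tau$ as the smallest \emph{real} zero of $M'$ in $(-1,1)$; a priori $M'$ could have complex zeros of modulus less than $\tau$, in which case some boundary value $A(z_0)$ with $|z_0|=\rho$, $z_0\ne\rho$, could hit such a zero and $z_0$ would be a second dominant singularity, destroying the transfer step. Remark~\ref{rmk2}'s warning about the pathological behaviour of $M$ is exactly why this cannot be waved through.

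In the paper, this global input is obtained from the hypothesis $\phi_n\ge 0$ of condition ($\mathbf{H_1}$): nonnegativity of the coefficients of $\phi(u)=u/M(u)$ forces the characteristic function $\phi(u)-u\phi'(u)$ (whose zeros are the zeros of $M'$) to be nonvanishing on $\{|w|\le\tau\}\setminus\{\tau\}$, and this is the technical heart of the proof. Establishing $\phi_n\ge 0$ is nontrivial: the paper does it by a contour-integral/residue comparison on $|z|=0.7$, using Fr\"oberg's numerical facts \cite{f66} about the zero $\alpha\doteq 0.58$ of $M(z)/z$ and about $\min_{|z|=0.7}|M(z)|$, which yields $\phi_n\ge 0$ for $n\ge 10$, with $n\le 9$ checked directly by computer. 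Your proposal contains no substitute for this step---neither a proof of coefficient positivity of $u/M(u)$ nor a rigorous (e.g., argument-principle) localization of the zeros of $M'$ in the complex disc---so as written it has a genuine gap precisely at the point where the paper's proof expends its effort. The local computation (the square-root expansion, the value of $d_1$, and the transfer giving $c\,\gamma^k k^{-3/2}$) is correct and matches the paper.
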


\begin{remark}
Evaluations to $7$ decimal places of the constants in Theorem~\ref{asymp} are given by $\tau \doteq 0.3229939$, $c \doteq 0.0809423$ and $\gamma \doteq 5.4874522$.
\end{remark}

In Sections~\ref{sect2} and \ref{sect3} of this paper, we describe basic notation and terminology for ECS, NECS and for a set of rooted trees that arises in the study of NECS. The proof of Theorem~\ref{mainresult} is given in Section~\ref{sect4}.
Recurrences and numerical results appear in Section~\ref{sect5}. The proof of Theorem~\ref{asymp} is in Section~\ref{sect6}. In addition, we prove related results for the number of NECS with~$k$ congruences in which the gcd and lcm of the congruences are also specified.   In Section~\ref{sect7} we give some combinatorial results about the coefficients.   Some open problems are described in Section~\ref{sect8}.  We make some final remarks in 
Section~\ref{sect9}.

\section{Basic notation and definitions for exact covering systems}
\label{sect2}

\subsection{Exact covering systems}

For integers $n\ge 1$ and $0\le a < n$, let $\la a,n\ra $ denote the residue class $\{ x\in\bbZ \ : \ x\equiv a \ ( \mymod n ) \}$.   Let
$\sU \subseteq \bbZ$. The set of~$k\ge 1$ residue classes
\begin{equation}\label{NECSset}
 C = \{ \la a_i,n_i \ra \ : \  i=1, \ld ,k \}, 
\end{equation}
is called an {\em exact covering system} (ECS) of $\sU$ when
\begin{itemize}
\item the sets $\la a_i,n_i\ra $, $i = 1,\ld ,k$ are pairwise {\em disjoint}, and
\item the sets $\la a_i,n_i\ra$, $i =1,\ld ,k$ {\em cover} the set~$\sU$, i.e.,
\[ \dbigcup_{i=1}^k \;\la a_i,n_i \ra  =\sU, \]
\end{itemize}
where the symbol $\dcup$ indicates a disjoint union. Given an ECS $C$ as in~\eqref{NECSset}, suppose that $\gcd\{ n_i \ : i=1,\ld ,k\} = d$, and that $\lcm\{ n_i \ : \ i=1,\ld ,k\} = m$. Then we say that $C$ has {\em size} $k$, {\em gcd} $d$, and {\em lcm} $m$, written
\[ \vert C \vert = k,\qquad \quad \gcd (C) = d, \qquad \quad \lcm (C) = m. \]
In the case that~$\sU =\bbZ=\la 0,1\ra$, we say more simply that $C$ is an ECS (without mentioning the set~$\bbZ$). Table~\ref{tab1} lists all ECS of size at most~$4$, together with their gcd and lcm.

\begin{remark}
Note that the gcd of an ECS need not equal its smallest modulus, and the lcm need not equal its largest modulus.  The smallest counterexample to both of these claims is
$$ \{ \la 1, 4 \ra,
\la 3, 4 \ra,
\la 0,  6  \ra,
\la 2, 6 \ra,
\la 4, 6 \ra \}, $$
an ECS of size $5$, gcd $2$, and lcm $12$.
\end{remark}

\begin{table}[H]
\begin{center}
\begin{tabular}{|c|c|c|c|}
\hline
size & exact covering system & gcd & lcm \\
\hline
$1$ & $\{ \la 0,1 \ra  \}$ &  1 &  1\\
\hline
$2$ & $\{ \la 0,2 \ra, \la 1,2 \ra \}$ &  2 & 2\\
\hline
 & $ \{  \la 0,3 \ra , \la 1,3 \ra, \la 2,3 \ra \} $ & 3 & 3 \\
 $3$  & $\{  \la 0,2 \ra, \la 1,4 \ra , \la 3,4 \ra  \}$ &  2 & 4\\
  & $\{  \la 1,2 \ra , \la 0,4 \ra, \la 2,4 \ra \}$ &  2 & 4\\
\hline  
  & $ \{  \la 0,4 \ra , \la 1,4 \ra, \la 2,4 \ra, \la 3,4 \ra  \}$ &  4  & 4\\
& $\{  \la 0,2 \ra, \la 1,6 \ra, \la 3,6 \ra, \la 5,6 \ra \}$ & 2 & 6\\
&  $\{  \la 0,3 \ra, \la 1,3 \ra, \la 2,6 \ra , \la 5,6\ra \}$ &  3 & 6\\
&  $\{  \la 0,3 \ra , \la 2,3 \ra, \la 1,6 \ra, \la 4,6 \ra \}$ &  3 & 6\\
$4$ &  $\{  \la 1,2 \ra,  \la 0,6 \ra, \la 2,6 \ra, \la 4,6 \ra \}$ &  2 & 6\\
&  $\{  \la 1,3 \ra,  \la 2,3 \ra ,  \la 0,6 \ra, \la 3,6 \ra  \} $ & 3 & 6\\
&  $\{  \la 0,2 \ra, \la 1,4 \ra,  \la 3,8 \ra, \la 7,8 \ra \}$ &  2 & 8\\
&  $\{ \la 0,2 \ra,  \la 3,4 \ra, \la 1,8 \ra, \la 5,8 \ra \}$ &  2 & 8\\
&  $\{  \la 1,2 \ra,  \la 0,4 \ra,  \la 2,8 \ra, \la 6,8 \ra \}$ & 2 & 8\\
&  $\{  \la 1,2 \ra,  \la 2,4 \ra, \la 0,8 \ra, \la 4,8 \ra \}$ &  2 & 8\\
\hline
\end{tabular}
\end{center}

\vspace{.2in}

\caption{The ECS of size at most $4$.}\label{tab1}
\end{table}

We now define two constructions for ECS.  First, consider an ECS 
$$C = \{ \la a_i,n_i \ra \ : \  i=1, \ld ,k \},$$ and a residue class~$\la b,c\ra $.
Define
\begin{equation}\label{bcexpn}
E_{\la b,c\ra}(C) =  \{ \la b + c \, a_i,c \, n_i \ra \ : \  i=1, \ld ,k \}, 
\end{equation}
which we refer to as the~$\la b,c\ra$-{\em expansion} of~$C$. Note that $E_{\la b,c\ra}(C)$ is itself an ECS of~$\la b,c\ra$.

Second, consider an ECS $C$, a residue class~$\la a,n\ra \in C$, and an integer $r\ge 2$. Let $C'$ be the set of residue classes obtained by removing $\la a,n\ra$ from~$C$ and replacing it by the~$r$ residue classes
\begin{equation}\label{rsplit}
\la a+jn,rn\ra, \qquad\qquad j=0,1,\ld ,r-1.
\end{equation}
Note that the residue classes in~\eqref{rsplit} are pairwise disjoint, and that they contain between them all integers in~$\la a,n\ra$; indeed, they are an ECS of the residue class~$\la a,n\ra$ of size~$r$ . An immediate consequence of this is that~$C'$ is also an ECS, with size given by~$\vert C'\vert = \vert C \vert +r-1$. We say that the collection of classes in~\eqref{rsplit} is the $r$-{\em split} of~$\la a,n\ra$, and that~$C'$ is an $r$-split of~$C$. Equivalently, we say that the collection of classes in~\eqref{rsplit} is obtained by~$r$-{\em splitting}~$\la a,n\ra$, and that~$C'$ is obtained by $r$-splitting~$C$. We also use the terms {\em split} and {\em splitting} in these same contexts, when we don't choose to specify the value of $r\ge 2$.

\subsection{Natural exact covering systems}\label{necs}

Let~$\sA$ be the set of exact covering systems that can be obtained by a finite sequence (possibly empty) of splits applied to~$\{ \la 0,1\ra\}=\{\bbZ\}$; we call this a {\em split sequence}. It is important to note that if this split sequence is an $r_1$-split, an $r_2$-split, $\ld$ , an $r_m$-split, for some~$m\ge 0$, then the values of $r_1,\ld ,r_m$ need not be the same, and indeed can vary arbitrarily over $r_1,\ld ,r_m \ge 2$ (and $m$ can be any non-negative integer).

The exact covering systems in~$\sA$ are called {\em natural exact covering systems}~(NECS). It is well known that in general (see, e.g.,~\cite[p.~392]{k84}) elements of~$\sA$ can be obtained by a split sequence in more than one way. As an easy example of this, the NECS of size $6$
\begin{equation}\label{4two2}
\{  \la 0, 6 \ra,
\la 1, 6 \ra,
\la 2,  6  \ra,
\la 3, 6 \ra,
\la 4,  6  \ra,
\la 5, 6 \ra  \}
\end{equation}
can be obtained by~$6$-splitting~$\{ \la 0,1\ra\}$, but it can also be created by the following split sequence of length~$3$: first~$2$-split~$\{ \la 0,1\ra\}$, then~$3$-split~$\{ \la 0,2\ra\}$, then~$3$-split~$\{ \la 1,2\ra\}$.

It has also long been known that not every ECS is an NECS, e.g., Porubsk{\'y}~\cite{p74}. The smallest size for an ECS that is not an NECS is~$13$,~\cite[Example 3.1]{s15}, which might seem surprisingly large when one first encounters the study of ECS. In particular, this means that all of the ECS in Table~\ref{tab1} are also NECS (equivalently, the caption for Table~\ref{tab1} could also be ``The NECS of size at most~$4$.'').

\section{Rooted ordered trees and NECS}
\label{sect3}

\subsection{Rooted ordered trees}

Let~$\sT$ be the set of rooted ordered trees with a finite (nonempty) set of vertices, in which each vertex has~$r$ ordered {\em children}, for some~$r$ in~$\{ 0,2,3,4,\ld \}$. The rooted tree consisting of a single (root vertex) is in~$\sT$, and we denote this rooted tree by~$\vep$.  We regard the trees in~$\sT$ as being embedded in the plane, with the root vertex at the bottom, and the children of each vertex above that vertex, ordered from left to right. Hence we refer to a vertex with~$r$ children as a vertex of {\em up-degree}~$r$, where~$r=0$ or~$r\ge 2$. We denote the set of trees in which the up-degree of the root vertex is~$r$ by~$\sT^{(r)}$, so we have~$\sT=\sT^{(0)} \dcup \sT^{(2)}\dcup \sT^{(3)}\dcup \cdots$, and~$\sT^{(0)}=\{ \vep\}$.

A vertex of up-degree~$0$ in a tree is called a {\em leaf}. Thus the root vertex in the tree~$\vep$ is a leaf (even though it has degree~$0$ in the graph sense). For a tree~$T$ in~$\sT$ rooted at vertex~$w$, the {\em height} of~$w$ in~$T$ is~$0$, and the height of any other vertex~$v$ in~$T$ is the edge-length of the unique path in~$T$ from the root~$w$ to~$v$. The {\em height} of the tree~$T$ itself, denoted by~$\het(T)$, is equal to the maximum height among the leaves in~$T$. Also, the number of leaves in~$T$ is denoted by~$\lam(T)$. For example, we have~$\het(\vep)=0$ and~$\lam(\vep)=1$, since the single vertex in~$\vep$ is a leaf at height~$0$.

For vertices~$u\neq v$ in tree~$T\in\sT$, we say that~$u$ is a {\em descendant} of~$v$ when either~$u$ is a child of~$v$, or is (recursively) the descendant of any child of~$v$. We denote the subtree of~$T$ whose vertices consist of~$v$ and its descendants by~$T_v$. For any vertex~$v$ of~$T\in\sT$, we have~$T_v\in\sT$. In particular, if~$v$ has up-degree~$r$ in~$T$, then~$T\in\sT^{(r)}$. Finally, if~$v$ is the root vertex of~$T$, then we have~$T_v=T$.

In Figure~\ref{treepic} we display an example of a tree~$T$ in~$\sT$ with root vertex~$w$. The root vertex has up-degree~$3$ (so~$T\in\sT^{(3)}$), and its three children, in order, are vertices~$x,y,z$. The tree has~$\lam(T)=10$ leaves;~$1$ (namely $y$) at height~$1$,~$3$ at height~$2$,~$4$ at height~$3$, and~$2$ at height~$4$. Hence the height of the tree is~$\het(T)=4$. For the subtrees~$T_x$,~$T_y$,~$T_z$ of~$T$ rooted at~$x,y,z$, respectively, we have~$\lam(T_x)=4$,~$\het(T_x)=2$, $T_y=\vep$ (so $\het(T_y)=0$,~$\lam(T_y)=1$), and $\lam(T_z)=5$,~$\het(T_z)=3$.

\begin{figure}[H]
\begin{center}
\includegraphics[width=1.7in]{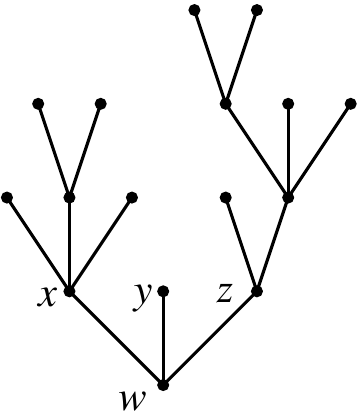}\ 
\hspace{.3in} \ 
\includegraphics[width=2.25in]{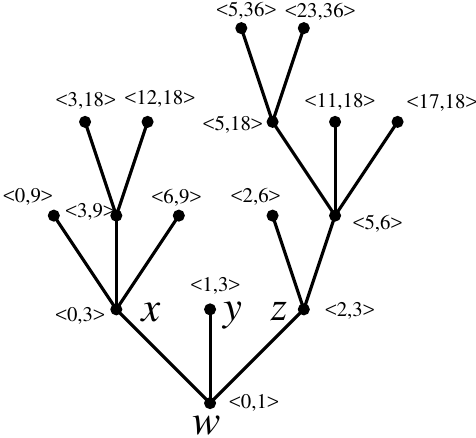}
\end{center}
\caption{Left:  a rooted ordered tree in the set~$\sT$.  Right: the same tree with~$\rho$ values assigned to vertices as in~\S 3.2.}\label{treepic}
\end{figure}

\subsection{The relationship between trees and NECS}

The reason for introducing the set of trees~$\sT$ is that every NECS can be represented by at least one tree in~$\sT$. To see this, consider a tree~$T$ in~$\sT$, and for each vertex~$v$ in~$\sT$, assign a residue class~$\rho(v)$ (which we'll refer to as ``assigning a ~$\rho$ value'' to~$v$), using the following iterative assignment procedure:
\begin{itemize}
\item Initially, assign~$\rho(w)=\la 0,1\ra$, where~$w$ is the root vertex of~$T$;
\item At every stage, find a non-leaf vertex~$u$ of~$T$ that has been assigned a~$\rho$ value, but whose children have not, and suppose that~$\rho(u)=\la a,n\ra$, for some~$0\le a<n$. Then, denoting the children of $u$ by $c_1,\ld ,c_r$, in order (i.e.,~$u$ has up-degree~$r$ for some~$r\ge 2$), assign
\[ \rho(c_{j+1}) = \la a+jn,rn \ra ,\qquad\qquad j=0,1,\ld ,r-1. \]
Stop when there is no such vertex~$u$.
\end{itemize}

Compare the assignments~$\rho(c_{j+1})$ given above for the children of a vertex with up-degree~$r$ to the~$r$-split of residue classes defined in~\eqref{rsplit}. Clearly, the successive stages for assigning~$\rho$ values to vertices of a tree in~$\sT$ in the iterative assignment procedure above, starting with assigning the~$\rho$ value~$\la 0,1\ra$ to the root,  is a realization of a split sequence (one split for each vertex that is not a leaf) applied to~$\la 0,1\ra$, and hence corresponds to an element of~$\sA$. The order of the splits in the split sequence corresponds to any total order of the set of non-root, non-leaf vertices in the tree that is a linear extension of the following partial order:~$u\prec v$ when~$v$ is a descendant of~$u$.

Recall that when we apply an~$r$-split in an element of~$\sA$, we replace one residue class by a set of~$r$ residue classes. Hence, in the corresponding tree in~$\sT$, when we assign a label to a vertex~$u$, we want the residue class~$\rho(u)$ to disappear, and to be replaced by the residue classes of its children. This means that if the leaves in a tree~$T\in\sT$ are given by~$\ell_1,\ldots,\ell_k$, $k\ge 1$, then the NECS corresponding to~$T$ is exactly the set of leaf labels~$\{\rho(\ell_1),\ldots,\rho(\ell_k)\}$, and we write
\[ \chi(T) = \{ \rho(\ell_1),\ldots ,\rho(\ell_k)\} . \]

For example, for the tree~$T$ in~$\sT$ displayed on the left side of Figure~\ref{treepic}, using the assignment procedure we initially assign~$\rho(w)=\la 0,1\ra$, then at the first stage we assign~$\rho(x) = \la 0,3\ra$, $\rho(y) = \la 1,3\ra$, $\rho(z) = \la 2,3\ra$. Upon completion of all stages of the assignment procedure, each vertex of~$T$ has a~$\rho$ value, which is displayed beside the vertex on the right side of Figure~\ref{treepic}. We thus obtain  
\begin{equation}\label{chiexample}
\chi(T) = \{ \la 0, 9 \ra,
\la 3, 18 \ra,
\la 12,  18  \ra,
\la 6, 9 \ra,
\la 1, 3 \ra,
\la 2, 6 \ra,
\la 5, 36 \ra,
\la 23,  36  \ra,
\la 11, 18 \ra,
\la 17, 18 \ra \},
\end{equation}
where this set of~$10$ residue classes gives the assigned~$\rho$ values for the~$10$ leaves in~$T$.

The correspondence that we have denoted by~$\chi$, between the set of rooted trees in~$\sT$ and the set~$\sA$ of NECS, is standard, and has appeared in the literature on NECS from the beginning, e.g., Porubsk{\'y}~\cite{p74} and Zn{\'a}m~\cite{z82}. In~\cite{p74} each vertex is identified with the residue class that we ``assign'' in our description above, and in~\cite{z82} the trees are treated in a slightly different but equivalent way using the notion of {\em product-distance}, and are referred to as~$\bbZ$-{\em trees}.

It is also standard that this correspondence between~$\sT$ and~$\sA$ is not one-to-one; this is simply a restatement of the fact we mentioned in Section~\ref{necs} above, that in general the elements of~$\sA$ can be obtained by a split sequence in more than one way. We summarize this situation in the following result.
\begin{proposition}\label{chionto}
The function
\[ \chi \colon \sT \to \sA \colon T\mapsto C    \]
is a surjection, in which $\lam(T)=\vert C\vert$.
\end{proposition}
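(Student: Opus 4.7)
The plan is to prove Proposition \ref{chionto} by induction on the length $m$ of a split sequence that realizes a given NECS $C \in \sA$. For the base case $m=0$, the only such NECS is $\{\la 0,1\ra\}$, and this is precisely $\chi(\vep)$: the iterative assignment procedure starts by setting $\rho(w) = \la 0,1\ra$ at the root of $\vep$, which is itself the unique leaf. Also $\lam(\vep) = 1 = |\{\la 0,1\ra\}|$, so both conclusions of the proposition hold.

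For the inductive step, I would take $C \in \sA$ obtained from an NECS $C'$ (produced by a split sequence of length $m-1$) by an $r$-split of some residue class $\la a,n\ra \in C'$. By the inductive hypothesis there is a tree $T' \in \sT$ with $\chi(T') = C'$ and $\lam(T') = |C'|$, so $\la a,n\ra = \rho(\ell)$ for some leaf $\ell$ of $T'$. I would then form $T$ by attaching $r$ new ordered children $c_1, \ldots, c_r$ to $\ell$, giving $\ell$ up-degree $r$ in $T$. The $\rho$-values assigned by the procedure agree on $T'$ and on the subtree of $T$ consisting of the old vertices, and the $r$ new children receive $\rho(c_{j+1}) = \la a+jn, rn\ra$ for $j=0,\ldots,r-1$, which is exactly the $r$-split of $\la a,n\ra$ as defined in~\eqref{rsplit}. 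Consequently the leaf-label set of $T$ is obtained from that of $T'$ by removing $\la a,n\ra$ and inserting the $r$ classes of the $r$-split, which is precisely $C$. The leaf count changes by $+r-1$ (the old leaf $\ell$ becomes internal, and $r$ new leaves appear), matching the change $|C| = |C'| + r-1$ in the size of the NECS.

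The one remaining point to verify is that the $\rho$-values at the leaves of $T$ are pairwise distinct, so that the set $\chi(T)$ actually has $\lam(T)$ elements. This is automatic, however, because $\chi(T) = C$ is an ECS: its residue classes are pairwise disjoint and hence in particular distinct. I do not anticipate a substantive obstacle here; the proof is essentially a bookkeeping argument showing that performing a split on an NECS and attaching children to the corresponding leaf of a representing tree are compatible operations. The only mild subtlety is that the inductive hypothesis needs to produce \emph{some} tree representing $C'$ (not a canonical one), reflecting the fact noted before the statement that $\chi$ is typically many-to-one; once this is phrased correctly, surjectivity and the leaf-count identity follow simultaneously.
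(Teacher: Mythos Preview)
Your proposal is correct. The paper does not supply a separate proof of this proposition; it states the result as a summary of the preceding discussion in \S3.2, where it is explained that the stages of the iterative assignment procedure realize a split sequence applied to $\langle 0,1\rangle$, one split per non-leaf vertex, so that the leaf labels of $T$ form an NECS of size $\lambda(T)$. Your induction on the length of the split sequence is precisely the rigorous formalization of that discussion, and matches the paper's implicit argument.
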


\begin{remark}
Proposition~\ref{chionto} makes it clear that the NECS of size~$k$ correspond to certain equivalence classes of trees in~$\sT$ with~$k$ leaves. The problem of counting the total number of trees in~$\sT$ with~$k$ leaves is well known. Let~$t_k$ denote the number of trees in~$\sT$ with~$k$ leaves, $k\ge 1$, and $T(x)  = \sum_{k\ge 1} t_k x^k$. Then the~$t_k$ are called {\em Schr\"oder} numbers, and the generating function has the closed form
\[ T(x) = \frac{1}{4}\Big( 1+x - \sqrt{1-6x+x^2}   \Big) ,  \]
see, e.g.,~\cite[pp.~69--70]{fs09} for a detailed description. An asymptotic form for~$t_k$ also appears in~\cite[pp.~474--475]{fs09} (as corrected at \url{http://ac.cs.princeton.edu/errata/}): 
\[ t_k \sim \omega \, \big( 3+2\sqrt{2} \big)^k \, k^{-3/2},\qquad\qquad \omega = \frac{1}{2^{\frac{7}{4}}\;\sqrt{\pi \big( 3+2\sqrt{2} \big) }} . \]
Of course,~$t_k$ is an upper bound for~$a_k$, reflecting the fact that~$3+2\sqrt{2}\doteq 5.828$, the asymptotic growth rate for~$t_k$, is larger than~$\gamma\doteq 5.487$, the asymptotic growth rate for~$a_k$ appearing in Theorem~\ref{asymp}.
\label{schro}
\end{remark}

\subsection{Subtrees rooted at children of the root and NECS}

For~$n\ge 2$ and~$T\in \sT^{(n)}$, suppose that the children of the root vertex of~$T$ are~$x_1,\ld ,x_n$, in order. When we apply our iterative assignment procedure to~$T$, we obtain~$\rho(x_i)=\la i-1,n\ra$, for $i=1,\ld ,n$. Then the residue classes that are assigned to the leaves in~$T_{x_i}$ form an NECS of~$\la i-1,n\ra$, and it is easy to check that this NECS is simply~$E_{\la i-1,n \ra}(\chi(T_{x_i}))$, using the expansion notation defined in~\eqref{bcexpn}. Putting the residue classes for all subtrees together, we thus obtain the relationship
\begin{equation}\label{subchi}
\chi(T)= \dbigcup_{i=1}^n E_{\la i-1,n \ra}(\chi(T_{x_i})).
\end{equation}

For example, for the tree~$T$ in~$\sT$ in Figure~\ref{treepic}, the root vertex~$w$ has up-degree~$3$, and the children of the root are~$x,y,z$, in order. For the subtrees rooted at~$x,y,z$, we have
\begin{align*} \chi(T_x) &= \{ \la 0, 3 \ra, \la 1, 6 \ra, \la 4,  6  \ra, \la 2, 3 \ra \}, \\
\chi(T_y) &= \{ \la 0,1 \ra \}, \\
\chi(T_z) &= \{ \la 0, 2 \ra, \la 1, 12 \ra, \la 7,  12  \ra, \la 3, 6 \ra, \la 5, 6 \ra \},   
\end{align*}
and the appropriate expansions of these NECS are given by
\begin{align*} E_{\la 0,3 \ra}(\chi(T_x)) &= \{ \la 0, 9 \ra, \la 3, 18 \ra, \la 12,  18  \ra, \la 6, 9 \ra \}, \\
E_{\la 1,3 \ra}(\chi(T_y)) &= \{ \la 1,3 \ra \}, \\
E_{\la 2,3 \ra}(\chi(T_z)) &= \{ \la 2, 6 \ra, \la 5, 36 \ra, \la 23,  36  \ra, \la 11, 18 \ra, \la 17, 18 \ra \}.   
\end{align*}
Comparing these with~\eqref{chiexample}, we have
\[ \chi(T) = E_{\la 0,3 \ra}(\chi(T_x)) \; \dcup \; E_{\la 1,3 \ra}(\chi(T_y)) \; \dcup \; E_{\la 2,3 \ra}(\chi(T_z)),  \]
confirming that relationship~\eqref{subchi} holds for the tree~$T$ in Figure~\ref{treepic}.

In the following results, we record some useful properties for subtrees of children of the root that follow immediately from~\eqref{subchi}.

\begin{proposition}\label{subtreedistn}
For~$T\in \sT^{(n)}$, $n\ge 2$, suppose that the children of the root vertex of~$T$ are~$x_1,\ld ,x_n$, in order, and that we have
\[ \chi(T)=C,\qquad \chi(T_{x_i})= C_i,, \quad i=1,\ld ,n.   \]
Then
\begin{enumerate}[(a)]
\item $\quad \vert C\vert = \vert C_1\vert +\cdots + \vert C_n\vert$ ,
\item $\quad \gcd(C) = n\cdot\gcd\{ \gcd(C_1), \ld , \gcd(C_n)\}$ ,
\item $\quad \lcm(C) = n\cdot\lcm\{ \lcm(C_1), \ld , \lcm(C_n)\}$ .
\end{enumerate}
\end{proposition}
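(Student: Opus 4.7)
The plan is to read off all three claims directly from the decomposition~\eqref{subchi}, using only the definition of the expansion operator~$E_{\la b,c\ra}$ from~\eqref{bcexpn} and elementary facts about $\gcd$ and $\lcm$. The key observation is that $E_{\la b,c\ra}$ acts on the modulus of each residue class by multiplication by~$c$, and on the representative by the affine map $a\mapsto b+ca$; in particular it is injective on the set of residue classes, so $|E_{\la b,c\ra}(C_i)|=|C_i|$, and the collection of moduli appearing in $E_{\la i-1,n\ra}(C_i)$ is exactly $\{n\cdot m : m \text{ is a modulus in } C_i\}$.

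For part (a), I would simply note that the union in~\eqref{subchi} is disjoint, so
\[ |C| = \sum_{i=1}^n |E_{\la i-1,n\ra}(\chi(T_{x_i}))| = \sum_{i=1}^n |C_i|, \]
where the last equality uses the injectivity observation above.

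For parts (b) and (c), I would write $\sM(D)$ for the multiset of moduli of an ECS $D$ and use the moduli description: the multiset of moduli of $\chi(T)$ is the disjoint union over $i$ of $n\cdot \sM(C_i)$ (where $n\cdot S$ denotes the multiset obtained by multiplying each element of $S$ by $n$). Since both $\gcd$ and $\lcm$ are homogeneous of degree one, in the sense that $\gcd(n s_1,\ldots,n s_k)=n\gcd(s_1,\ldots,s_k)$ and likewise for $\lcm$, and since both are associative so that they can be computed by grouping the $C_i$'s first, we get
\[ \gcd(C) = \gcd_{i}\gcd\bigl(n\cdot\sM(C_i)\bigr) = n\cdot \gcd_{i}\gcd(C_i), \]
and exactly the analogous equality with $\lcm$ in place of $\gcd$.

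There is essentially no obstacle here, as the proposition is a direct unpacking of~\eqref{subchi}; the only thing to be careful about is to invoke the disjointness of the union in~\eqref{subchi} for (a), and to make clear (for (b) and (c)) that the scalar factor $n$ can be pulled out of a $\gcd$ or $\lcm$ taken over any nonempty multiset of positive integers.
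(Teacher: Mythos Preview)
Your argument is correct and is exactly what the paper intends: the authors do not write out a proof of this proposition, but simply remark that the three identities ``follow immediately from~\eqref{subchi}'', and your proposal is precisely the routine unpacking of that sentence. There is nothing to add.
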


\begin{proposition}\label{subtreeequiv}
For~$P,Q\in\sT^{(n)}$, $n\ge 2$, suppose that the children of the root vertex of~$P$ (respectively,~$Q$) are~$y_1,\ld ,y_n$ (respectively, $z_1,\ld ,z_n$), in order. Then $\chi(P)=\chi(Q)$ if and only if $\chi(P_{y_i})=\chi(Q_{z_i})$, $i=1,\ld ,n$.
\end{proposition}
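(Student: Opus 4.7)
The plan is to use the decomposition~\eqref{subchi} as the engine in both directions. For the ``if'' direction, suppose $\chi(P_{y_i}) = \chi(Q_{z_i})$ for each $i=1,\ldots,n$. Then applying $E_{\la i-1,n\ra}$ to both sides yields equal expansions, and taking the disjoint union over~$i$ while invoking~\eqref{subchi} for both~$P$ and~$Q$ gives $\chi(P)=\chi(Q)$.

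For the ``only if'' direction, the strategy is to show that each summand $E_{\la i-1,n\ra}(\chi(P_{y_i}))$ on the right-hand side of~\eqref{subchi} can be uniquely recovered from~$\chi(P)$. The key observation is that every residue class in $E_{\la i-1,n\ra}(\chi(P_{y_i}))$ has the form $\la (i-1)+na,\, nm\ra$ for some $\la a,m\ra\in\chi(P_{y_i})$, and is therefore a subset of the class $\la i-1,n\ra$. Conversely, because $\la 0,n\ra,\la 1,n\ra,\ldots,\la n-1,n\ra$ are pairwise disjoint and jointly cover $\bbZ$, any residue class $\la b,k\ra$ appearing in~\eqref{subchi} is contained in exactly one $\la i-1,n\ra$, namely the one for which $b\equiv i-1\pmod n$ (and then necessarily $n\mid k$). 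Thus~\eqref{subchi} is the unique partition of $\chi(P)$ according to ``which class $\la i-1,n\ra$ an element lies inside,'' and the same holds for $\chi(Q)$. From $\chi(P)=\chi(Q)$ we therefore conclude $E_{\la i-1,n\ra}(\chi(P_{y_i}))=E_{\la i-1,n\ra}(\chi(Q_{z_i}))$ for each~$i$.

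To finish, I would observe that $E_{\la i-1,n\ra}$ is injective on sets of residue classes: the class $\la (i-1)+na,\, nm\ra$ uniquely determines the parameters $a\in\{0,\ldots,m-1\}$ and $m\ge 1$, so the preimage under $E_{\la i-1,n\ra}$ is well defined. Equality of outputs therefore forces $\chi(P_{y_i})=\chi(Q_{z_i})$ for each~$i$, as required. I anticipate no serious obstacle here; the proof is essentially a careful unwinding of~\eqref{subchi} and~\eqref{bcexpn}. The only point requiring mild care is the elementary set-theoretic equivalence ``$\la b,k\ra\subseteq \la j,n\ra$ iff $n\mid k$ and $b\equiv j\pmod n$,'' which is what guarantees that the partition of $\chi(P)$ induced by the root-children of~$P$ lines up exactly with that induced by the root-children of~$Q$.
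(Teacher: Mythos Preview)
Your proof is correct and follows exactly the route the paper intends: the paper states that Proposition~\ref{subtreeequiv} (together with Proposition~\ref{subtreedistn}) ``follow[s] immediately from~\eqref{subchi}'' and gives no further argument, so your careful unwinding of~\eqref{subchi} and~\eqref{bcexpn}---showing that the blocks $E_{\la i-1,n\ra}(\chi(P_{y_i}))$ are recoverable from $\chi(P)$ by sorting residue classes according to their residue mod~$n$, and that each $E_{\la i-1,n\ra}$ is injective---is precisely the implicit justification.
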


Now we turn to a different type of result for rooted trees~$T$, in which we give a bijective construction that preserves the corresponding NECS~$\chi(T)$. For compactness in stating the result, let~$\sG_{a,b}$ denote the set of trees~$T\in\sT^{(a)}$ such that all children of the root vertex have up-degree $b$, ~$a,b\ge 2$. 

\begin{lemma}\label{ablemma}
For $a,b\ge 2$, there is a bijection
\[ \sT^{(ab)} \to \sG_{a,b} \colon T \mapsto S \]
with~$\chi(T)=\chi(S)$.
\end{lemma}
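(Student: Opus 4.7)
The plan is to define the bijection explicitly and then verify the $\chi$-preservation by composing expansions via~\eqref{subchi}.

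Let $T\in\sT^{(ab)}$ and write its root's children, in order, as $x_1,\ldots,x_{ab}$. Note that the map
\[ \{1,\ldots,a\}\times\{1,\ldots,b\}\to\{1,\ldots,ab\} \colon (i,j)\mapsto (i-1)+a(j-1)+1 \]
is a bijection. I define $S$ to be the tree whose root has $a$ ordered children $y_1,\ldots,y_a$, each $y_i$ having $b$ ordered children $z_{i,1},\ldots,z_{i,b}$, with the subtree at $z_{i,j}$ declared to be
\[ S_{z_{i,j}} \;=\; T_{x_{k(i,j)}}, \qquad \text{where } k(i,j)=(i-1)+a(j-1)+1. \]
By construction $S\in\sG_{a,b}$. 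Since the shape of $S$ is fixed (root of up-degree $a$, each root-child of up-degree $b$) and the grandchildren are ordered, specifying the subtrees $S_{z_{i,j}}$ determines $S$ uniquely; conversely, any $S\in\sG_{a,b}$ arises from a unique $T\in\sT^{(ab)}$ by reversing the indexing $(i,j)\leftrightarrow k(i,j)$. Hence the map $T\mapsto S$ is a bijection.

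It remains to check $\chi(T)=\chi(S)$. The key computation is that the expansion operator composes nicely: directly from definition~\eqref{bcexpn}, for any residue class set~$D$,
\[ E_{\la i-1,a\ra}\bigl(E_{\la j-1,b\ra}(D)\bigr) \;=\; E_{\la (i-1)+a(j-1),\, ab\ra}(D). \]
Applying~\eqref{subchi} to $S$, and then to each $S_{y_i}\in\sT^{(b)}$, gives
\[ \chi(S) \;=\; \dbigcup_{i=1}^{a}\dbigcup_{j=1}^{b} E_{\la i-1,a\ra}\Bigl(E_{\la j-1,b\ra}\bigl(\chi(S_{z_{i,j}})\bigr)\Bigr)
  \;=\; \dbigcup_{i=1}^{a}\dbigcup_{j=1}^{b} E_{\la (i-1)+a(j-1),\,ab\ra}\bigl(\chi(T_{x_{k(i,j)}})\bigr). \]
Using the bijection $(i,j)\leftrightarrow k=k(i,j)$ and the fact that $(i-1)+a(j-1)=k-1$, this reindexes to
\[ \chi(S) \;=\; \dbigcup_{k=1}^{ab} E_{\la k-1,ab\ra}\bigl(\chi(T_{x_k})\bigr) \;=\; \chi(T), \]
where the final equality is~\eqref{subchi} applied to $T$.

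The only substantive step is the composition identity for expansions; once that is in hand, the bijection is forced by matching indices, and the remainder is bookkeeping. I would expect no real obstacle beyond being careful with the ordering convention (i.e., which of $i$ or $j$ runs faster in the linearization $k(i,j)$), since a wrong choice would produce a bijection at the tree level but might scramble the correspondence of residue classes.
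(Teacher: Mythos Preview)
Your proof is correct and follows essentially the same approach as the paper: the bijection you define is exactly the one in the paper (the $j$th child of $y_i$ is $x_{i+(j-1)a}$), and both proofs verify $\chi(T)=\chi(S)$ by checking that the grandchild $z_{i,j}$ of $S$ receives the same residue class $\la (i-1)+a(j-1),\,ab\ra$ as the corresponding child $x_{k(i,j)}$ of $T$. The only cosmetic difference is that the paper tracks the $\rho$ values directly via the iterative assignment procedure, whereas you package the same computation as the composition identity $E_{\la i-1,a\ra}\circ E_{\la j-1,b\ra}=E_{\la (i-1)+a(j-1),\,ab\ra}$ together with two applications of~\eqref{subchi}.
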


\begin{proof}
Consider~$T\in\sT^{(ab)}$, and let the children of the root vertex~$w$ of~$T$ be~$x_1,\ld ,x_{ab}$, in order. We construct a tree~$S\in\sG_{a,b}$ as follows: The root vertex~$u$ of~$S$, and its children~$y_1,\ld ,y_a$, in order, are newly created vertices (i.e., they are not vertices in~$T$). For $i=1,\ld ,a$, vertex~$y_i$ at height~$1$ in~$S$ has~$b$ children, given by vertices $x_{i},x_{i+a}, \ld ,x_{i+(b-1)a}$ of~$T$, in order. For $m=1,\ld ab$, the construction of~$S$ is completed by rooting subtree~$T_{x_m}$ of~$T$ at vertex~$x_m$ of~$S$ (to become the subtree~$S_{x_m}$ of~$S$).

Note that when we apply our iterative assignment procedure to~$T$, we obtain~$\rho(x_m)=\la m-1,ab \ra$, for~$m=1,\ld ,ab$ (in which $x_m$ is regarded as a vertex in~$T$). Also, it is easy to check that when we apply our iterative assignment procedure to~$S$, we obtain~$\rho(x_m)=\la m-1,ab \ra$, for~$m=1,\ld ,ab$ (in which~$x_m$ is regarded as a vertex in~$S$). But this implies that all vertices in the subtrees~$T_{x_m}=S_{x_m}$, $m=1,\ld ,ab$, will be assigned the same~$\rho$ values, and hence that~$\chi(T)=\chi(S)$.

This construction is clearly reversible, and thus gives the required bijection.
\end{proof}

We now apply Lemma~\ref{ablemma} to prove a result that will be key for our proof of the main result in Section~\ref{sect4}. Again for compactness in stating the result, let~$\sD_n$ denote the set of trees~$T\in\sT$ such that $n \, \vert \, \gcd(\chi(T))$, $n\ge 2$. 

\begin{proposition}\label{treedivprop}
For~$n\ge 2$,
\[ \{ \chi(S) \colon S \in \sD_n \} = \{ \chi(T) \colon T \in \sT^{(n)} \}. \]
\end{proposition}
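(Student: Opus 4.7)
My plan is to prove the two inclusions separately. The containment $\{\chi(T) : T\in\sT^{(n)}\}\subseteq\{\chi(S):S\in\sD_n\}$ will be immediate from Proposition~\ref{subtreedistn}(b): for any $T\in\sT^{(n)}$, the identity $\gcd(\chi(T))=n\cdot\gcd\{\gcd(C_1),\ldots,\gcd(C_n)\}$ (where the $C_i$ are the NECS of the root's subtrees) is automatically divisible by $n$, so $T\in\sD_n$.

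For the reverse containment I will use strong induction on $|\chi(S)|$, aiming to show that every $S\in\sD_n$ satisfies $\chi(S)=\chi(T)$ for some $T\in\sT^{(n)}$. Let $m\ge 2$ be the up-degree of the root of~$S$, let $x_1,\ldots,x_m$ be its children in order, write $C_j=\chi(S_{x_j})$ and $g=\gcd\{\gcd(C_1),\ldots,\gcd(C_m)\}$, so by Proposition~\ref{subtreedistn}(b) we have $\gcd(\chi(S))=mg$. Set $d=\gcd(n,m)$, $n'=n/d$, $m'=m/d$. The hypothesis $n\mid mg$ together with $\gcd(n',m')=1$ will force $n'\mid g$; this is the divisibility that I will exploit.

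I will then split into two cases. When $n'=1$ (so $n\mid m$), either $m=n$ and $S$ is already in $\sT^{(n)}$, or $m=nk$ with $k\ge 2$ and Lemma~\ref{ablemma} applied with $(a,b)=(n,k)$ sends $S\in\sT^{(nk)}$ to some $T\in\sG_{n,k}\subseteq\sT^{(n)}$ with $\chi(T)=\chi(S)$. When $n'\ge 2$, the divisibility $n'\mid g$ gives $S_{x_j}\in\sD_{n'}$ for every $j$; since $m\ge 2$ each $S_{x_j}$ has strictly fewer leaves than $S$, so by the inductive hypothesis I obtain $T_j\in\sT^{(n')}$ with $\chi(T_j)=C_j$. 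Attaching $T_1,\ldots,T_m$ as ordered subtrees under a new root of up-degree $m$ produces $T'\in\sG_{m,n'}$ with $\chi(T')=\chi(S)$ (by~\eqref{subchi} and Proposition~\ref{subtreeequiv}). Applying the inverse of the bijection in Lemma~\ref{ablemma} with $(a,b)=(m,n')$ then lifts $T'$ to some $T''\in\sT^{(mn')}=\sT^{(nm')}$ with the same $\chi$-value. If $m'=1$ this $T''$ is already in $\sT^{(n)}$ and I am done; otherwise $m'\ge 2$ and one further forward application of Lemma~\ref{ablemma}, now with $(a,b)=(n,m')$, delivers the required tree in $\sG_{n,m'}\subseteq\sT^{(n)}$.

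The main conceptual point is the dichotomy on $\gcd(n,m)$: when $n\mid m$ the tree can be reshaped at the root in a single pass, whereas when $n\nmid m$ the ``missing'' factor $n'=n/\gcd(n,m)$ has to be supplied by the subtrees, which is where the induction is invoked. The hardest step to execute carefully will be composing the two applications of Lemma~\ref{ablemma} in the case $n'\ge 2$ so that $\chi$ is preserved throughout and the resulting root really has up-degree exactly~$n$; this bookkeeping (and confirming that $m'=1$ and $m'\ge 2$ together cover all possibilities correctly) is where errors are most likely to appear.
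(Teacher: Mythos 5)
Your proposal is correct, and it follows the same overall strategy as the paper (easy inclusion via Proposition~\ref{subtreedistn}(b); hard inclusion by induction, with Lemma~\ref{ablemma} doing the reshaping at the root), but the details of the induction are genuinely different. The paper inducts on $\het(S)$ and splits according to whether the root up-degree is $n$ itself, a multiple $nb$ with $b\ge 2$, or a proper divisor $a\ge 2$ of $n$ (in the last case invoking the inductive hypothesis for the subtrees with modulus $b=n/a$ and then applying Lemma~\ref{ablemma} once). You instead induct on the number of leaves $\lam(S)=\vert\chi(S)\vert$ (which also strictly decreases on root-subtrees, so this is equally valid) and organize the cases by $d=\gcd(n,m)$, where $m$ is the root up-degree, extracting $n'\mid g$ from coprimality of $n'=n/d$ and $m'=m/d$. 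This buys you a uniform treatment of an arbitrary root up-degree: your argument covers values of $m$ that are neither multiples nor divisors of $n$ (e.g., $n=4$, $m=6$, or $n=2$, $m=3$, where the subtree gcds supply the factor $n'$), which the paper's three-case enumeration does not explicitly list; the price is a composite use of Lemma~\ref{ablemma}, first backwards from $\sG_{m,n'}$ into $\sT^{(mn')}=\sT^{(nm')}$ and then (when $m'\ge 2$) forwards into $\sG_{n,m'}\subseteq\sT^{(n)}$, each step preserving $\chi$ as you indicate. Two small points to make explicit when you write it up: the inductive statement must be quantified over all moduli at once (you invoke it with $n'$, not $n$), exactly as the paper does with $b$; and the root of any $S\in\sD_n$ with $n\ge 2$ cannot be a leaf, since $\gcd(\chi(\vep))=1$, which is what legitimizes ``let $m\ge 2$ be the up-degree of the root.''
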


\begin{proof}
Let~$\Omega_1 = \{ \chi(S) \colon S \in \sD_n \}$ and~$\Omega_2= \{ \chi(T) \colon T \in \sT^{(n)} \}$. We will prove that~$\Omega_1=\Omega_2$ by proving both containments~$\Omega_1\subseteq\Omega_2$ and~$\Omega_2\subseteq\Omega_1$.

First, to prove~$\Omega_2\subseteq\Omega_1$. Proposition~\ref{subtreedistn}(b) implies that~$\sT^{(n)}\subseteq \sD_n$, from which~$\Omega_2\subseteq\Omega_1$ follows immediately.

Second, to prove~$\Omega_1\subseteq\Omega_2$. It is sufficient to prove the following implication: For all~$n\ge 2$ and~$S\in\sD_n$, there exists~$T\in\sT^{(n)}$ such that~$\chi(T)=\chi(S)$. We prove this by induction on the height of~$S$. For the base case, consider~$S\in\sD_n$ of height equal to~$1$. Hence, from Proposition~\ref{subtreedistn}(b),~$S\in\sT^{(nb)}$ for some $b\ge 1$, in which the~$nb$ children of the root in~$S$ are leaves. If~$b=1$, then~$S\in\sT^{(n)}$, giving the result immediately. If~$b\ge 2$, then Lemma~\ref{ablemma}  with~$a=n$ implies that there exists a tree~$R\in\sG_{n,b}\subseteq\sT^{(n)}$ with~$\chi(R)=\chi(S)$, proving the result in this case. 

Assume that the implication is true for all~$n\ge 2$ and trees in~$\sD_n$ of height at most~$k$, for some~$k\ge 1$. Consider~$n\ge 2$ and an arbitrary tree~$S\in\sD_n$ with~$\het(S)=k+1$. Using Proposition~\ref{subtreedistn}(b), there are three cases:
\begin{itemize}
\item $S\in\sT^{(n)}$, which gives the result immediately. 
\item $S\in\sT^{(nb)}$ for some $b\ge 2$. Then Lemma~\ref{ablemma} with~$a=n$ implies that there exists a tree~$R\in\sG_{n,b}\subseteq\sT^{(n)}$ with~$\chi(R)=\chi(S)$, proving the result in this case. 
\item $S\in\sT^{(a)}$ for some proper divisor~$a\ge 2$ of~$n$, so~$n=ab$,~$a,b\ge 2$. In this case, in addition, if~$x_1,\ld,x_a$ are the children of the root vertex of~$S$, then from Proposition~\ref{subtreedistn}(b) we have
\[ b \, \vert \, \gcd\{\gcd(\chi(S_{x_1})),\ld , \gcd(\chi(S_{x_a})) \},  \]
and hence~$b \, \vert \, \gcd(\chi(S_{x_i}))$ for $i=1,\ld ,a$. Equivalently,~$S_{x_i}\in\sD_b$ for~$i=1,\ld ,a$. But~$\het(S_{x_i})\le k$ for~$i=1,\ld ,k$. Hence, for~$i=1,\ld ,k$, from the induction hypothesis, there exists~$R_{x_i}\in\sT^{(b)}$ (also rooted at vertex~$x_i$) such that $\chi(R_{x_i})=\chi(S_{x_i})$. Now construct a tree~$R$ by removing the subtree~$S_{x_i}$ from~$S$, and replacing it by the subtree~$R_{x_i}$, for~$i=1,\ld ,a$. Note that~$R\in\sG_{a,b}$, and that~$\chi(R)=\chi(S)$, from Proposition~\ref{subtreeequiv}. Then, Lemma~\ref{ablemma} implies that there exists a tree~$Q\in\sT^{(ab)}=\sT^{(n)}$ with~$\chi(Q)=\chi(R)=\chi(S)$, proving the result in this case.
\end{itemize}
This completes the inductive proof of the implication, and thus that~$\Omega_1\subseteq\Omega_2$.
\end{proof}

\section{Proof of the main result}
\label{sect4}

It will be convenient to partition the set~$\sA$ of NECS according to gcd. Hence let $\sA_{m}$ denote the set of NECS with gcd~$m$, for $m\ge 1$. Recall that~$a_k$ is the number of NECS of size~$k$, for~$k\ge 1$, and let~$a_{k,m}$ denote the number of NECS of size~$k$ and gcd~$m$, for~$k,m\ge 1$. As in~\eqref{fpsAM}, we have the generating function
\begin{equation}\label{agenser}
A(x) = \sum_{k\ge 1} a_k x^k = \sum_{C\in\sA} x^{\vert C\vert} ,
\end{equation}
and we define the additional generating functions
\begin{equation}\label{AAjmdefn} 
 A_m(x) = \sum_{k\ge 1} a_{k,m} x^k = \sum_{ C\in\sA_m } x^{\vert C\vert} ,\qquad m\ge 1. 
 \end{equation}
Of course, these generating functions are related by
\begin{equation}\label{AAjdefn} 
A(x) = \sum_{m\ge 1} A_m(x) .
\end{equation}

Note that the situation for~$C\in\sA$ with $\gcd(C)=1$ is particularly simple: We must have~$C=\chi(T)$ for some~$T\in \sT^{(0)}$, from Proposition~\ref{subtreedistn}(b). But the only tree in~$\sT^{(0)}$ is the single-vertex tree~$\vep$, and we have~$\chi(\vep) = \la 0,1\ra$. Since~$\la 0,1\ra$ has both size and gcd equal to~$1$, we conclude that~$\sA_1=\{ \la 0,1 \ra \}$, and hence from~\eqref{AAjmdefn} that
\begin{equation}\label{A1x}
A_1(x)=x .
\end{equation}

In part~(a) of the following result, we prove a functional equation for the generating functions~$A(x)$ and~$A_m(x), m\ge 1$, that is a generalization of equation~\eqref{AAjdefn} above. The proof that we give for part~(b) of the result is to apply M\"obius inversion to part~(a), which is the reason that the M\"obius series~$M$ defined in~\eqref{fpsAM} appears in the statement of part~(b).

\begin{theorem}\label{mainmthm}
\leavevmode
\begin{enumerate}[(a)]
\item
For $n\ge 1$ we have
\[ A(x)^n = \sum_{d \ge 1} A_{nd}(x).  \]
\item
For $m\ge 1$ we have
\[ M\big( A(x)^m \big) = A_m(x). \]
\end{enumerate}
\end{theorem}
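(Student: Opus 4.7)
The plan is to prove (a) by a combinatorial identification using the structural results of Section~\ref{sect3}, and then to deduce (b) from (a) via M\"obius inversion.

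For (a), the case $n=1$ is exactly equation~\eqref{AAjdefn}. For $n\ge 2$, observe that $\sum_{d\ge 1} A_{nd}(x)$ is the generating function, by size, for NECS whose gcd is divisible by $n$, and by Proposition~\ref{treedivprop} this equals the generating function for $\{\chi(T) : T\in\sT^{(n)}\}$. To identify this with $A(x)^n$, I would exhibit a size-preserving bijection
\[ \phi \colon \sA^n \;\to\; \{\chi(T) : T\in\sT^{(n)}\} \]
defined by sending $(C_1,\ld,C_n)$ to $\chi(T)$, where $T\in\sT^{(n)}$ is any tree whose $n$ subtrees rooted at the children of the root, taken in order, are trees $T_1,\ld,T_n\in\sT$ with $\chi(T_i)=C_i$ (which exist by the surjectivity in Proposition~\ref{chionto}). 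Proposition~\ref{subtreeequiv} ensures that $\phi$ is both well-defined (independent of the choice of the $T_i$) and injective, while surjectivity follows by taking $T_i=T_{x_i}$ for a tree realizing the target NECS. Proposition~\ref{subtreedistn}(a) yields $\vert\phi(C_1,\ld,C_n)\vert = \vert C_1\vert+\cdots+\vert C_n\vert$, from which
\[ \sum_{d\ge 1} A_{nd}(x) \;=\; \sum_{(C_1,\ld,C_n)\in\sA^n} x^{\vert C_1\vert+\cdots+\vert C_n\vert} \;=\; A(x)^n. \]

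For (b), I plan to apply M\"obius inversion. By part (a),
\[ M\bigl(A(x)^m\bigr) \;=\; \sum_{k\ge 1} \mu(k)\,A(x)^{mk} \;=\; \sum_{k,d\ge 1} \mu(k)\,A_{mkd}(x), \]
and setting $j=kd$ gives
\[ M\bigl(A(x)^m\bigr) \;=\; \sum_{j\ge 1} A_{mj}(x) \sum_{k\mid j} \mu(k) \;=\; A_m(x), \]
since the inner sum equals $1$ when $j=1$ and vanishes otherwise.

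The main technical point is the simultaneous use of Propositions~\ref{subtreedistn}(a), \ref{subtreeequiv}, and \ref{treedivprop} to verify that $\phi$ is a size-preserving bijection; once this is in place the rest is routine. The rearrangement of the double sum in (b) raises no convergence issue in the formal-power-series ring, since each coefficient of $x^k$ involves only finitely many terms (only $j\le k/m$ contributes).
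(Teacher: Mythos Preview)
Your proof is correct and follows essentially the same approach as the paper: both identify $\sum_{d\ge 1}A_{nd}(x)$ with the generating function for $\{\chi(T):T\in\sT^{(n)}\}$ via Propositions~\ref{chionto} and~\ref{treedivprop}, then set up a size-preserving bijection between this set and $\sA^n$ using Propositions~\ref{subtreeequiv} and~\ref{subtreedistn}(a), and finally deduce (b) from (a) by M\"obius inversion. The only cosmetic difference is that the paper writes the bijection in the direction $\sU(\sA,n)\to\sA^n$ while you write its inverse $\sA^n\to\{\chi(T):T\in\sT^{(n)}\}$.
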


\begin{proof}
\begin{enumerate}[(a)]
\item
For $n=1$, the result is given by~\eqref{AAjdefn} above.

For $n\ge 2$, we begin the proof by defining~$\sU(\sA,n) = \dcup_{d\ge 1} \sA_{nd}$, the set of all~$C\in\sA$ such that~$n \mid \gcd(C)$. Now, from Proposition~\ref{treedivprop} and Proposition~\ref{chionto}, we have
\[ \sU(\sA,n) = \{ \chi(T)\colon T\in \sT^{(n)}  \}.  \]
But removing the root vertex from a tree in~$\sT^{(n)}$ to obtain an ordered list of $n$ rooted trees in $\sT$ (namely, the subtrees rooted at the~$n$ children of the root vertex), yields the usual bijection between~$\sT^{(n)}$ and~$\sT^n$ (the set of~$n$-tuples of elements of $\sT$). Together with Proposition~\ref{subtreeequiv}, as well as Proposition~\ref{chionto}, this implies that there is a bijection between~$\{ \chi(T)\colon T\in \sT^{(n)}  \}$ and the set~$\sA^n$ of~$n$-tuples of NECS in~$\sA$. Putting these pieces together and eliminating the set~$\{ \chi(T)\colon T\in \sT^{(n)} \}$ yields a bijection
\begin{equation}\label{biject}
\sU(\sA,n) \to \sA^n \colon C \mapsto (C_1,\ld ,C_n).
\end{equation}
Moreover, in the above bijection, from Proposition~\ref{subtreedistn}, $C$ and its image~$(C_1,\ld ,C_n)$ are related by the equations
\begin{align}
\vert C\vert &= \vert C_1\vert +\ld + \vert C_n\vert , \label{bijsize}\\
\gcd(C) &= n\cdot\gcd\{ \gcd(C_1), \ld , \gcd(C_n)\} ,  \label{bijgcd} \\
\lcm(C) &= n\cdot\lcm\{ \lcm(C_1), \ld , \lcm(C_n)\}. \label{bijlcm}
\end{align} 

Now we turn to generating functions. Applying bijection~\eqref{biject} to the range of summation below, and using relation~\eqref{bijsize}, we obtain
\[  \sum_{C \in \sU(\sA,n)} x^{\vert C\vert} = \sum_{(C_1,\ld ,C_n)\in\sA^n} x^{ \vert C_1\vert +\ld + \vert C_n\vert }. \]
But
\[ \sum_{C \in \sU(\sA,n)} x^{\vert C\vert} = \sum_{d\ge 1} \sum_{C \in \sA_{nd}} x^{\vert C\vert} =  \sum_{d \ge 1} A_{nd}(x) ,  \]
from~\eqref{AAjmdefn}, and
\[ \sum_{(C_1,\ld ,C_n)\in\sA^n} x^{ \vert C_1\vert +\cdots + \vert C_n\vert } =\prod_{i=1}^n \sum_{C_i\in\sA}x^{\vert C_i\vert} = A(x)^n,  \]
from~\eqref{agenser}, completing the proof of the result for $n\ge 2$.

\item
For fixed $m\ge 1$, replace $n$ in part~(a) of this result by $mn$, multiply on both sides by $\mu(n)$, and sum over $n\ge 1$, to obtain
 \[ \sum_{n\ge 1}\mu(n)A(x)^{mn} = \sum_{n\ge 1} \mu(n) \sum_{d\ge 1} A_{mnd}(x) .  \]
But the right-hand side of this equation can be rewritten as
\[ \sum_{n\ge 1} \mu(n) \sum_{d\ge 1} A_{mnd}(x) = \sum_{i\ge 1} A_{mi}(x) \sum_{n\vert i } \mu(n)= A_m(x), \]
using the standard fact (see, e.g.,~\cite[p.~235]{hw60}) that
\begin{equation}\label{mu10}
\sum_{n\vert i} \mu(n)=\begin{cases}
\, 1,\qquad & \text{if $i=1$;} \\
\, 0,\qquad & \text{if $i\ge 2$.}
\end{cases}
\end{equation}
Hence we obtain the equation
\begin{equation} 
\sum_{n\ge 1}\mu(n)A(x)^{mn} = A_m(x), 
\label{niceq}
\end{equation}
or $M\big( A(x)^m\big) =A_m(x)$, as required.
\end{enumerate}
\end{proof}

We are now able to prove the main result, as a simple consequence of the above Theorem.

\begin{proof}[Proof of Theorem~\ref{mainresult}]
Specializing Theorem~\ref{mainmthm}(b) to the case~$m=1$ gives~$M(A(x)) = A_1(x)$.
The result follows immediately from~\eqref{A1x}.
\end{proof}

\section{Recurrences specifying size, gcd and lcm}
\label{sect5}

In the following result we give a recurrence equation for the numbers~$a_{k,m}$ of NECS with size~$k$ and gcd~$m$.

\begin{proposition}\label{maincoeff}
For $k,n,d\ge 1$,
\begin{equation}\label{conjR}
 \sum a_{j_1,m_1} \cdots a_{j_n,m_n} = a_{k,nd}, 
 \end{equation}
where the summation on the left-hand side is over $j_1,\ldots ,j_n\ge 1$ and $m_1,\ldots ,m_n\ge 1$ such that
\begin{equation}\label{bijcond}
  j_1+ \cdots +j_n=k, \qquad\quad \textit{and} \qquad\quad \gcd\{ m_1, \ldots , m_n\} =d  .
  \end{equation}
 \end{proposition}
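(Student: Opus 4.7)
The plan is to derive this identity coefficient-by-coefficient from a refinement of Theorem~\ref{mainmthm}(a). The key observation is that the bijection~\eqref{biject} used in that proof, namely $\sU(\sA,n) \to \sA^n \colon C \mapsto (C_1,\ldots,C_n)$, preserves size via~\eqref{bijsize} and relates the gcd of $C$ to those of the $C_i$ via~\eqref{bijgcd}: $\gcd(C) = n\cdot\gcd\{\gcd(C_1),\ldots,\gcd(C_n)\}$. This second relation is exactly what is needed to track the gcd refinement, but it was summed out in Theorem~\ref{mainmthm}(a); here we instead restrict by it.

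First I would restrict the bijection to the subset $\sA_{nd}\subseteq\sU(\sA,n)$. By~\eqref{bijgcd}, the image of $\sA_{nd}$ under~\eqref{biject} consists precisely of those $n$-tuples $(C_1,\ldots,C_n)\in\sA^n$ with $\gcd\{\gcd(C_1),\ldots,\gcd(C_n)\} = d$. Summing $x^{|C|}$ over the restricted bijection and applying~\eqref{bijsize}, I obtain the refined generating-function identity
\[
A_{nd}(x) \;=\; \sum_{\substack{m_1,\ldots,m_n\ge 1 \\ \gcd\{m_1,\ldots,m_n\}=d}} A_{m_1}(x)\cdots A_{m_n}(x),
\]
where the inner summands are grouped according to the gcds of the constituent NECS.

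Finally, extracting the coefficient of $x^k$ on both sides, using $a_{k,m}=[x^k]A_m(x)$ and the product rule
\[
[x^k]\,A_{m_1}(x)\cdots A_{m_n}(x) \;=\; \sum_{j_1+\cdots+j_n=k} a_{j_1,m_1}\cdots a_{j_n,m_n},
\]
yields the claimed identity. The case $n=1$ is handled separately but is trivial: the sum reduces to $a_{k,d}$, which equals $a_{k,1\cdot d}$ by direct reading.

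No real obstacle arises: the proposition is essentially a bookkeeping refinement of Theorem~\ref{mainmthm}(a). The only point requiring any care is making sure that the gcd condition on the right-hand side, namely $\gcd\{m_1,\ldots,m_n\}=d$, matches exactly the partition of $\sU(\sA,n)$ provided by~\eqref{bijgcd}; this is an immediate consequence of the definitions, since $\gcd(C)=nd$ is equivalent to $\gcd\{\gcd(C_1),\ldots,\gcd(C_n)\}=d$.
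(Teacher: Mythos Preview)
Your proof is correct and follows essentially the same approach as the paper: both restrict the bijection~\eqref{biject} to the fiber $\sA_{nd}\subseteq\sU(\sA,n)$ and read off the identity from relations~\eqref{bijsize} and~\eqref{bijgcd}. The only cosmetic difference is that you pass through an intermediate generating-function identity before extracting the coefficient of $x^k$, whereas the paper counts directly.
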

 
 \begin{proof}
 For~$n=1$, the result is simply the identity~$a_{k,d}=a_{k,d}$.
 
 For~$n\ge 2$, consider bijection~\eqref{biject}. The number of elements in the set~$\sU(\sA,n)$ with size~$k$ and~gcd~$nd$ is give by~$a_{k,nd}$. But from the bijection, this is equal to the number of $n$-tuples $(C_1,\ld ,C_n)\in \sA^n$ in which, from~\eqref{bijsize} and~\eqref{bijgcd}, we have
\[  \vert C_1\vert +\cdots + \vert C_n\vert = k,\qquad\text{and}\qquad \gcd\{ \gcd(C_1), \ld , \gcd(C_n)\} = d  . \]
The result for~$n\ge 2$ follows immediately.
\end{proof}

\begin{remark}
Proposition~\ref{maincoeff}, with $d = 1$, can be used to list all elements of~$\sA_{k,n}$, the set of NECS with size $k$ and gcd $n$, and also to count their number~$a_{k,n}$, using either dynamic programming, or recursion together with memoization.  We briefly describe this second approach.    The base cases of the recursion are
$k = n$ (for which the only NECS is
$\{ \la 0,k \ra, \la 1, k \ra, \ldots, \la k-1, k \ra \}$) and $n = 1$ (for which the only NECS is
$\{ \la 0,1 \ra \}$ corresponding to $k = 1$).  Given $k$ and $n$ as input, we can easily compute all  
$\binom{k-1}{n-1}$ compositions of $k$ into 
$n$ positive parts (using, for example,
the algorithm in \cite[Chap.~5]{nw78}).  We now discard those compositions whose $\gcd$ is greater than one.  For each composition $(j_1, j_2, \ldots, j_n)$ that remains, we
consider all $j_1 j_2 \cdots j_n$ of the
$n$-tuples $(m_1, m_2, \ldots, m_n)$
satisfying $1 \leq m_i \leq j_i$ for $i = 1,2,\ldots n$.
For each element $(j_i, m_i)$ in the list of pairs
$((j_1, m_1), \ldots, (j_n, m_n))$ we
recursively compute all the NECS~$C_i$ in~$\sA_{j_i, m_i}$. Using the expansion construction given in~\eqref{bcexpn}, we finally create the NECS
\[ \dbigcup_{i=1}^n E_{\la i-1 ,n \ra } (C_i)  . \]
If we are only interested in counting these NECS, we sum all the products
$a_{j_1,m_1} \cdots a_{j_n, m_n}$ instead.

Using an implementation of this algorithm written in
APL, we computed $a_{k,n}$ for $1 \leq n \leq k \leq 22$.  We report
the results for $1 \leq n \leq k \leq 13$ in Table~\ref{tabsizegcd13}.  The code is available at
\centerline{\url{https://cs.uwaterloo.ca/~shallit/papers.html} \ .}
\end{remark}

\begin{table}[H]
\begin{center}
\begin{tabular}{c|ccccccccccccccc}
$k \backslash n$ & 1 & 2 & 3 & 4 & 5 & 6 & 7 & 8 & 9 & 10 & 11 & 12 & 13 \\
\hline
1 & 1 \\
2 & 0 & 1 \\
3 & 0 & 2 & 1 \\
4 & 0 & 6 & 3 & 1 \\
5 & 0 & 22 & 12 & 4 & 1 \\
6 & 0 & 88 & 48 & 18 & 5 & 1 \\
7 & 0 & 372 & 207 & 80 & 25 & 6 & 1 \\
8 & 0 & 1636 & 918 & 366 & 120 & 33 & 7 & 1 \\
9 & 0 & 7406 & 4188 & 1700 & 580 & 170 & 42 & 8 & 1 \\
10 & 0 & 34276 & 19488 & 8026 & 2810 & 864 & 231 & 52 & 9 & 1 \\
11 & 0 & 161436 & 92199 & 38384 & 13710 & 4356 & 1232 & 304 & 63 & 10 & 1 \\
12 & 0 & 771238 & 442056 & 185644 & 67330 & 21936 & 6454 & 1698 & 390 & 75 & 11 & 1 \\
13 & 0 & 3728168 & 2143329 & 906472 & 332825 & 110562 & 33523 & 9232 & 2277 & 490 & 88 & 12 & 1 \\
\end{tabular}
\end{center}
\caption{Table of values for $a_{k,n}$, $1\le k,n\le 13$}\label{tabsizegcd13}
\end{table}

Now let $a_{k,m,\ell}$ denote the number of NECS of size $k$, gcd $m$ and lcm $\ell$, for $k,m,\ell\ge 1$. Of course, $a_{k,m,\ell} = 0$ unless $m\,\vert\, \ell$. The following is a version of Proposition~\ref{maincoeff} that records the lcm as well as size and gcd.  It can be proved in the same way, by considering the bijection~\eqref{biject} together with relations~\eqref{bijsize},~\eqref{bijgcd} and~\eqref{bijlcm}. 

\begin{proposition}\label{recordlcm}
 For $k,n,d,D\ge 1$,
 \begin{equation*}\sum a_{i_1,j_1,\ell_1} \cdots a_{i_n,j_n,\ell_n} = a_{k,nd,nD}, 
 \end{equation*}
where the sum on the left-hand side is over $i_1,\ldots ,i_n\ge 1$, $j_1,\ldots ,j_n\ge 1$ and $\ell_1,\ldots ,\ell_n\ge 1$  such that
\[  i_1+ \cdots +i_n=k,\qquad \gcd\{ j_1, \ldots , j_n\} =d,   \quad {\textit{and}} \quad  \mathrm{lcm} \{ \ell_1,\ldots ,\ell_n\} = D . \]
 \end{proposition}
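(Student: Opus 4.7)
The plan is to mimic the proof of Proposition~\ref{maincoeff}, with one additional coordinate to track the lcm. The essential input is the bijection~\eqref{biject} from the proof of Theorem~\ref{mainmthm}(a), which identifies $\sU(\sA,n)$, the set of $C\in\sA$ with $n\mid\gcd(C)$, with the set $\sA^n$ of $n$-tuples of NECS, and which preserves the three statistics size, gcd and lcm via the identities~\eqref{bijsize},~\eqref{bijgcd} and~\eqref{bijlcm}.

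For $n=1$ the claimed identity reduces to $a_{k,d,D}=a_{k,d,D}$, so I would dispose of this case immediately. For $n\ge 2$, I would begin by computing the left-hand side combinatorially: an $n$-tuple $(C_1,\ldots,C_n)\in\sA^n$ is counted with weight $1$ on the left precisely when $|C_i|=i_s$, $\gcd(C_s)=j_s$ and $\lcm(C_s)=\ell_s$ for some $(i_s,j_s,\ell_s)$ satisfying the three constraints $i_1+\cdots+i_n=k$, $\gcd\{j_1,\ldots,j_n\}=d$ and $\lcm\{\ell_1,\ldots,\ell_n\}=D$. So the left-hand sum equals the total number of $n$-tuples $(C_1,\ldots,C_n)\in\sA^n$ satisfying
\[ |C_1|+\cdots+|C_n| = k,\quad \gcd\{\gcd(C_1),\ldots,\gcd(C_n)\} = d,\quad \lcm\{\lcm(C_1),\ldots,\lcm(C_n)\} = D. \]

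Next I would transport this count through the bijection~\eqref{biject}. By the three relations~\eqref{bijsize},~\eqref{bijgcd},~\eqref{bijlcm}, the $n$-tuples satisfying the above three constraints correspond bijectively to those $C\in\sU(\sA,n)$ for which $|C|=k$, $\gcd(C)=nd$, and $\lcm(C)=nD$. Since any $C$ with $\gcd(C)=nd$ automatically lies in $\sU(\sA,n)$, the number of such $C$ is exactly $a_{k,nd,nD}$, yielding the claimed identity. There is no genuine obstacle here: the only thing to check is that~\eqref{bijlcm} plays perfectly in parallel with~\eqref{bijgcd} under the same bijection already established for Proposition~\ref{maincoeff}, which it does because $\lcm(C)$ is expressed in the same product-with-$n$ form as $\gcd(C)$. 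Thus the proof proceeds word-for-word along the lines of Proposition~\ref{maincoeff}, with the single extra bookkeeping of the lcm statistic.
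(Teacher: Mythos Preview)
Your proposal is correct and follows exactly the approach the paper indicates: the paper simply says the result ``can be proved in the same way [as Proposition~\ref{maincoeff}], by considering the bijection~\eqref{biject} together with relations~\eqref{bijsize},~\eqref{bijgcd} and~\eqref{bijlcm},'' and that is precisely what you do. Aside from a harmless index typo ($|C_i|=i_s$ should read $|C_s|=i_s$), there is nothing to add.
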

 
\section{Asymptotic growth of coefficients}
\label{sect6}

We now turn to asymptotics, and begin with the proof of Theorem~\ref{asymp}.

\vspace{.1in}

\begin{proof}[Proof of Theorem~\ref{asymp}]
Theorem~\ref{mainresult} gives $M(A(x))=x$, and rearranging this equation, we get
\begin{equation}\label{phiform}
A(x) = x\, \phi(A(x)),
\end{equation}
where
\begin{equation}\label{phiM}
\phi(u)=\frac{u}{M(u)}=\frac{1}{G(u)},\qquad\qquad G(u)=\sum_{k\ge 1} \mu(k)u^{k-1}.
\end{equation}
The claim will follow from a result of Flajolet and Sedgwick~\cite[Theorem VI.6, pp.~404--405]{fs09}, which gives the asymptotic behaviour of the coefficients of $A(x)$ in terms of~$\phi(u) = \sum_{n\ge 0} \phi_n u^n$. Their theorem requires that $\phi$ be a nonlinear function satisfying the following conditions~($\bf{H}_{\bf{1}}$) and~($\bf{H}_{\bf{2}}$):

\vspace{.1in}

\textbf{Condition} ($\bf{H}_{\bf{1}}$). The function $\phi(u)$ is analytic at $u=0$ and satisfies
\[ \phi(0)\neq 0;\qquad\qquad \phi_n\ge 0, \text{ for } n\ge 0;\qquad\qquad \phi(u)\not\equiv \phi_0 + \phi_1 u. \]

\vspace{.1in}

\textbf{Condition} ($\bf{H}_{\bf{2}}$). Within the open disc of convergence of $\phi$ at $0$, say of radius $R>0$, there exists a (then necessarily unique) positive solution to the characteristic equation
\begin{equation}\label{chareqn}
\phi(u)-u\, \phi'(u)=0, \qquad 0<u<R.
\end{equation}

\vspace{.1in}

We now prove that $\phi$ satisfies the above conditions. Clearly $\phi$ is nonlinear, so~$\phi(u)\not\equiv \phi_0 + \phi_1 u$. Then note that all coefficients in the series $G$ defined in~\eqref{phiM} have absolute value equal to $0$ or $1$, so $G(u)$ converges for all $\vert u\vert <1$. Also, the zero of $G$ of smallest absolute value in $(-1,1)$ is given by
\[ \alpha \doteq 0.580294623807326723064776237226780436649 , \]
(an eight-digit approximation was previously given by Fr\"oberg~\cite{f66}). Hence, for~($\bf{H}_{\bf{1}}$), we obtain that $\phi(u)$ is analytic at $u=0$, and that $\phi(0)=\frac{1}{G(0)}=1\neq 0$. Also, for~($\bf{H}_{\bf{2}}$), the open disc of convergence of $\phi$ at $0$ has radius $R= \alpha$, from~\eqref{phiM}. Then in order to solve the characteristic equation~\eqref{chareqn}, we obtain from~\eqref{phiM} that
\[ \phi(u)-u\, \phi'(u) = \frac{1}{G(u)} + \frac{uG'(u)}{G^2(u)} =\frac{G(u)+uG'(u)}{G^2(u)} = \frac{M'(u)}{G^2(u)}, \]
and hence the unique positive solution to the characteristic equation~\eqref{chareqn} must be a zero of $M'$. Now there are two zeros of $M'$ in $(-\alpha,\alpha)$, given by
\begin{align*}
\tau & \doteq \;\;\; 0.32299391330283353998122564696308569320205174841752276244233373344634953499, \\
\beta & \doteq -0.562976540744649358189645954216416402249939799218087618317349878994076506622,
\end{align*}
(eight-digit approximations were previously given by Fr{\"o}berg \cite{f66}). As guaranteed in the statement of~($\bf{H}_{\bf{2}}$) above, exactly one of these roots, namely $\tau$, is positive.

Now, to prove the remaining condition of~($\bf{H}_{\bf{1}}$), that~$\phi_n\ge 0$ for~$n\ge 0$, we consider the contour integral
\[ I_n = \frac{1}{2\pi i}\int_{|z|=0.7} \frac{\phi(z)}{z^{n+1}}dz,\qquad n\ge 0. \]
From Fr\"oberg~\cite{f66}, the only zero of $G$ in the complex circle $|z|\le 0.7$ is at $z=\alpha$, so, using~\eqref{phiM},  there are two poles for $I_n$, at $0$ and $\alpha$. Also from Fr\"oberg~\cite{f66},  $\alpha$ is {\em not} a zero of $G'$, so $\alpha$ is a simple pole for $I_n$. Hence by Cauchy's theorem, we obtain
\[ I_n = \phi_n + \frac{1}{\alpha^{n+1} G'(\alpha)} = \phi_n + R_n, \]
where $\phi_n$ is the residue at $0$, and $R_n = \frac{1}{\alpha^{n} M'(\alpha)}$ is the residue at $\alpha$. But~$M'(\alpha) \doteq - 1.5863869$,~so
\[ |R_n| \doteq \frac{1}{(0.5802946)^n (1.5863869)} > \frac{1}{(0.6)^n (1.6)}. \]
Also, by the ML inequality (aka the ``estimation lemma'') we have
\begin{align*}  |I_n| &= \frac{1}{2\pi}\left| \int_{|z|=0.7} \frac{\phi(z)}{z^{n+1}}dz  \right|  \le \frac{2\pi (0.7)}{2\pi} \max_{|z|=0.7} \left|  \frac{ \phi(z)}{z^{n+1}}\right| =(0.7) \max_{|z|=0.7} \frac{1}{| z^{n}M(z)|}\\
 &= (0.7)^{1-n} \max_{|z|=0.7} \frac{1}{|M(z)|} = \frac{ (0.7)^{1-n} }{  \min_{|z|=0.7} |M(z)|  }.
\end{align*}
But from Fr\"oberg~\cite{f66}, $|M(z)|$ is minimized on the complex circle~$|z|=0.7$ at $z=0.7$, and we have~$|M(0.7)| \doteq 0.2582108$. Therefore
\[  |I_n| \le \frac{(0.7)^{1-n}}{ 0.2582108 } <  \frac{(0.7)^{1-n}}{ 0.25 } = \frac{2.8}{(0.7)^n}, \]
and combining the above inequalities for $|R_n|$ and $|I_n|$, we get
\begin{align*}
\frac{|I_n|}{|R_n|} &< (2.8)(1.6) \left( \frac{0.6}{0.7}  \right)^n = \exp \Big( \log (4.48) + n \big( \log (0.6) - \log (0.7)  \big) \Big) \\
&\doteq \exp \Big( 1.499623 - 0.154151 \, n \Big) < \exp \big( 1.5 - 0.15 \, n  \big) .
\end{align*}
It follows immediately that $|I_n|<|R_n|$ for~$n\ge 10$. But~$|I_n|=|\phi_n+R_n|$, and~$I_n$,~$R_n$,~$\phi_n$ are all real, with $R_n<0$, so we have $\phi_n \ge 0$ for~$n\ge 10$. To prove that $\phi_n\ge 0$ for $n=0,1,\ldots ,9$, we have simply used Maple, and find that
\[ \phi(u) = 1+u+2u^2+3u^3+6u^4+9u^5+17u^6+28u^7+50u^8+83u^9 + \mathcal{O}\big( u^{10}\big) . \] 

We have now proved that $\phi$ satisfies all the conditions required for Theorem VI.6 of~\cite{fs09}, which then gives
\begin{equation}\label{expnA}
A(x) = \tau - d_1 \Big( 1-\frac{x}{\rho}\Big)^{1/2} + \sum_{j \ge 2} (-1)^j d_j \Big( 1-\frac{x}{\rho}\Big)^{j/2},
\end{equation}
where $\rho = \frac{\tau}{\phi(\tau)}=M(\tau)$, $d_1= \sqrt{\frac{2\phi(\tau)}{\phi''(\tau)}}=\sqrt{-\frac{2M(\tau)}{M''(\tau)}}$, and the remaining $d_j$ are computable constants. Moreover, since $\phi$ is aperiodic (that is, $\phi(u)$ is not of the form $u^m f(u^d)$ for some series $f$ and integer $d\ge 2$), Theorem~VI.6 also gives
\[ a_k \sim \frac{d_1}{2\sqrt{\pi}}\; \rho^{-k} \, k^{-3/2} . \]
The result follows, with
\[ \gamma = \rho^{-1}\qquad\text{and}\qquad c= \sqrt{-\frac{M(\tau)}{2\pi M''(\tau)}}\; , \]
where decimal approximations to the constants are given by
\begin{align*} \rho = M(\tau) & \doteq  0.18223393401633630828235226904174072905168066104 , \\
M''(\tau) & \doteq  -4.426886252469575251674551833111186610459374194161738 , \\
\gamma & \doteq 5.48745218829746214756744529323030925532004291024 , \\
c & \doteq 0.08094229418609730035861577123355531751035381267 . 
\end{align*}
\end{proof}

Theorem~\ref{mainmthm}(b) gives a closed form for the generating series $A_m(x) = \sum_{k\ge 1} a_{k,m} x^k$, in terms of the series~$A(x) $ and~$M(x)$. Once again, we have not been able to use this result to determine a useful explicit expression for the $k$th coefficient $a_{k,m}$ in $A_m(x)$. However, in the next result, we are able to determine a precise asymptotic form for the coefficients $a_{k,m}$, following on from the proof of Theorem~\ref{asymp} above.

\begin{theorem}
For each fixed $m\ge 2$, the number $a_{k,m}$ of NECS of size $k$ with gcd $m$ is asymptotically
\[ a_{k,m} \sim m \tau^{m-1} M'(\tau^m) \, c \, \gamma^k \, k^{-3/2} , \]
where $\tau \doteq 0.3229939$, $c \doteq 0.0809423$ and $\gamma \doteq 5.4874522$.
\label{asympgcd}
\end{theorem}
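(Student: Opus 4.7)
\bigskip
\noindent\textbf{Proof proposal.}
The plan is to combine Theorem~\ref{mainmthm}(b), which gives the closed form $A_m(x)=M(A(x)^m)$, with the singular behavior of $A(x)$ established in the proof of Theorem~\ref{asymp}, and then apply singularity analysis. Concretely, that proof shows that $A(x)$ admits an expansion of the form
\[ A(x) = \tau - d_1\left(1-\frac{x}{\rho}\right)^{1/2} + \sum_{j\ge 2} (-1)^j d_j \left(1-\frac{x}{\rho}\right)^{j/2}, \]
valid in a suitable $\Delta$-domain around the dominant singularity $x=\rho=M(\tau)$, where $d_1=\sqrt{-2M(\tau)/M''(\tau)}$. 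This is the only piece of analytic machinery needed as input.

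First I would raise this expansion to the $m$-th power to get $A(x)^m = \tau^m - m\tau^{m-1}d_1(1-x/\rho)^{1/2} + O(1-x/\rho)$ near $x=\rho$. Next I would compose with $M$. The key observation is that $M(u)=\sum_{k\ge 1}\mu(k)u^k$ has radius of convergence $1$, and since $\tau<1$ we have $\tau^m<1$, so $M$ is analytic at $u=\tau^m$. Expanding $M$ as a Taylor series around $u=\tau^m$ and substituting the expansion of $A(x)^m$ yields
\[ A_m(x) = M(\tau^m) - m\tau^{m-1}M'(\tau^m)\, d_1 \left(1-\frac{x}{\rho}\right)^{1/2} + O\!\left(1-\frac{x}{\rho}\right) \]
in the same $\Delta$-domain. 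A standard transfer theorem (e.g.\ Flajolet--Sedgewick, Corollary VI.1) then extracts
\[ [x^k]\,A_m(x) \sim m\tau^{m-1} M'(\tau^m)\, \frac{d_1}{2\sqrt{\pi}}\, \rho^{-k}\, k^{-3/2}. \]
Finally I would identify $d_1/(2\sqrt{\pi})=c$ and $\rho^{-1}=\gamma$ directly from the definitions in Theorem~\ref{asymp}, producing the stated asymptotic.

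The only subtlety, and the step I would check most carefully, is that the square-root term genuinely persists, i.e.\ that $M'(\tau^m)\ne 0$. This is where the hypothesis $m\ge 2$ enters: by definition $\tau$ is the zero of $M'$ of smallest absolute value in $(-1,1)$, and for $m\ge 2$ we have $0<\tau^m<\tau$, so $M'(\tau^m)\ne 0$. (As a sanity check, setting $m=1$ formally gives the prefactor $M'(\tau)=0$, which matches the fact that $A_1(x)=x$ has no $k^{-3/2}$ tail.) The only other thing to verify is that $x=\rho$ remains the unique dominant singularity of $A_m$ on $|x|=\rho$ and that $A_m$ extends to a $\Delta$-domain there; both inherit directly from the corresponding properties of $A$ (established in the proof of Theorem~\ref{asymp}) together with the analyticity of $M$ at $\tau^m$, so no new hard work is required.
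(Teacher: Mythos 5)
Your proposal is correct and follows essentially the same route as the paper: raise the singular expansion of $A(x)$ at $x=\rho$ to the $m$th power, compose with $M$ (analytic at $\tau^m$ since $\tau^m<1$) to get the square-root singularity of $A_m(x)=M(A(x)^m)$ with coefficient $-m\tau^{m-1}M'(\tau^m)d_1$, and transfer via Flajolet--Sedgewick singularity analysis, identifying $c=d_1/(2\sqrt{\pi})$ and $\gamma=\rho^{-1}$. Your added checks that $M'(\tau^m)\neq 0$ for $m\ge 2$ and that $\Delta$-analyticity is inherited from $A$ are sensible refinements of the same argument, not a different method.
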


\begin{proof}
From~\eqref{expnA}, taking the $m$th power, we have
\begin{equation}\label{singexpn2}
A(x)^m = \tau^m - m \tau^{m-1} d_1\, \Big(1-\frac{x}{\rho}\Big)^{1/2} + {\mathcal O} \Big(1-\frac{x}{\rho} \Big),
\end{equation}
where $\tau$, $\rho$ and $d_1$ are specified in the proof of Theorem~\ref{asymp}. In particular, since~$\tau \in(0,1)$, then~$\tau^m \in(0,1)$ for every positive integer $m$. Now using the linear expansion
\[ M(a+z) = M(a) + M'(a) z + {\mathcal O}(z^2),  \]
and~\eqref{singexpn2}, we obtain
\[ M(A(x)^m) = M(\tau^m) - m\tau^{m-1} M'(\tau^m) \, d_1 \, \Big(1-\frac{x}{\rho}\Big)^{1/2} + {\mathcal O}\Big(1-\frac{x}{\rho} \Big).  \]
But $M\big( A(x)^m \big) = A_m(x)$ from Theorem~\ref{mainmthm}(b), and we now determine the asymptotic behaviour of the coefficients in $A_m(x)$ from the above expansion. We use the technique of singularity analysis as described in \cite[Chapter VI]{fs09}. From Theorems~VI.1 on page~381 and~VI.3 on page~390, we thus obtain
\[ a_{k,m} \sim - m\tau^{m-1} M'(\tau^m) \, \frac{d_1}{\Gamma(-\frac{1}{2})} \, \rho^{-k} k^{-3/2} . \]
But $\Gamma\left( -\frac{1}{2}\right) = -2\sqrt{\pi}$, and the result follows, with
\[ \gamma = \rho^{-1} \qquad\mathrm{and}\qquad c= \frac{d_1}{2\sqrt{\pi}}= \sqrt{-\frac{M(\tau)}{2\pi M''(\tau)}} \; ,\]
with the decimal approximations of these constants as given in the proof of Theorem~\ref{asymp}.
\end{proof}

\begin{remark}
Comparing the asymptotic forms in Theorems~\ref{asymp} and~\ref{asympgcd}, we observe that
\[  a_{k,m}\sim m \tau^{m-1} M'(\tau^m) \, a_k .\]
But $\sum_{m\ge 2} a_{k,m}=a_k$ for~$k\ge 2$, so we should have
\begin{equation}\label{consistentasymp}
\sum_{m\ge 2} m \tau^{m-1} M'(\tau^m)  = 1 . 
\end{equation}
Here is a direct proof of~\eqref{consistentasymp} (which therefore provides a consistency check on our asymptotic results): A well known series identity~(see, e.g.,~\cite[p.~258]{hw60}) that follows immediately from~\eqref{mu10} is given by
\[ \sum_{k\ge 1} \mu(k) \; \frac{x^k  }{ 1-x^k  } = x.  \]
The summation on the left-hand side is referred to as the {\em Lambert series} for the M\"obius function. Rewriting the left-hand side in terms of the series $M$ itself, we obtain
\[ \sum_{m\ge 1} M(x^m) = x.  \]
Differentiating on both sides of this equation with respect to~$x$ gives
\begin{equation}\label{xident}
\sum_{m\ge 1} m x^{m-1} M'(x^m) = 1,
\end{equation}
and this holds for any~$x\in (-1,1)$. Our proof of~\eqref{consistentasymp} is then completed by substituting~$x=\tau $ in~\eqref{xident}, and noting that~$M'(\tau)=0$.
\end{remark}

\begin{remark}
Along similar lines, note that Theorem~\ref{asympgcd} also holds for~$m=1$, but the result is trivial in this case, since $M'(\tau)=0$. Hence the result for $m=1$ states that the number $a_{k,1}$ is asymptotically $0$. This is consistent with~\eqref{A1x}, which states that $A_1(x)=\sum_{k\ge 1} a_{k,m} x^k = x$, and hence $a_{k,1}=0$ for all $k\ge 2$.
\end{remark}

\section{A formula for $a_{g+n,n}$}
\label{sect7}

Inspection of the downward-sloping diagonals in Table~\ref{tabsizegcd13} suggests
that for each $n$
there is a polynomial $f_n (x)$ such that
$a_{g+n, n} = f_n (g)$ for $g >n$.   Furthermore,
it seems that $f_n(x)$ is a polynomial of degree $n$,
with leading coefficient $x^n/n!$ and constant term $0$,
and all other coefficients positive.
The first few such polynomials seem to be as follows (expressed
in the basis of binomial coefficients):
\begin{align*}
f_1 (x) &= {x \choose 1} \\
f_2 (x) &= {x \choose 2} + 3 {x \choose 1}\\
f_3 (x) &= {x \choose 3} + 6 {x \choose 2} + 10 {x \choose 1} \\
f_4 (x) &= {x \choose 4} + 9 {x \choose 3} + 29 {x \choose 2} + 39 {x \choose 1} \\
f_5 (x) &= {x \choose 5} + 12 {x \choose 4} + 57 {x \choose 3} + 138 {x \choose 2} + 160 {x \choose 1} \\
f_6 (x) &= {x \choose 6} + 15 {x \choose 5} + 94 {x \choose 4} + 324 {x \choose 3} + 654 {x \choose 2} + 691 {x \choose 1}.
\end{align*}
Furthermore it appears that the $\ell$'th differences
of the coefficients of the $\ell$'th column above is $3^\ell$.

We now explain these empirical observations.  We start with the following lemma about formal power series.

\begin{lemma}
Suppose $F(x)=1+c_1x+c_2x^2+\cdots$ is a formal
power series with constant coefficient $1$. The coefficient of $x^m$ in $F(x)^n$ equals
\[
\sum_{k=1}^m c_{m,k} \binom nk  
\]
where $c_{m,k}$ is the coefficient of $x^m$ in $(F(x)-1)^k$. We have
$c_{m,m}=c_1^m$ and $c_{m,1}=c_m$.
Furthermore, if $c_j\geq 0$ for all $j$, then $c_{m,k} >0$ for all $k$, $1\leq k\leq m$. 
\label{and}
\end{lemma}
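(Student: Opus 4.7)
The plan is to reduce this to a direct application of the formal binomial theorem. I would write $F(x) = 1 + G(x)$ with $G(x) := F(x) - 1 = c_1 x + c_2 x^2 + \cdots$, so that $G(x)$ has zero constant term. Then for any positive integer $n$,
\[ F(x)^n = \bigl(1 + G(x)\bigr)^n = \sum_{k=0}^{n} \binom{n}{k} G(x)^k. \]
The key observation is that since $G(x)$ has no constant term, $G(x)^k$ is divisible by $x^k$; hence the coefficient of $x^m$ in $G(x)^k$ vanishes whenever $k > m$, and the $k = 0$ term contributes nothing to the $x^m$ coefficient for $m \ge 1$. Extracting the coefficient of $x^m$ from both sides, and using the definition $c_{m,k} = [x^m] G(x)^k = [x^m] (F(x)-1)^k$, I would obtain
\[ [x^m] F(x)^n \;=\; \sum_{k=1}^{m} c_{m,k} \binom{n}{k}, \]
where any spurious terms with $k > n$ vanish by the convention $\binom{n}{k} = 0$. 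This yields the main identity.

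Next, the two explicit values follow immediately. Taking $k = 1$ gives $G(x)^1 = F(x) - 1$, whose $x^m$ coefficient is $c_m$, so $c_{m,1} = c_m$. Taking $k = m$ and expanding the product $G(x)^m = (c_1 x + c_2 x^2 + \cdots)^m$, the only way to obtain a term of degree exactly $m$ is to pick the factor $c_1 x$ from each of the $m$ copies, so $c_{m,m} = c_1^m$.

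For the positivity claim, I would expand the coefficient combinatorially as
\[ c_{m,k} = \sum_{\substack{i_1 + \cdots + i_k = m \\ i_j \ge 1}} c_{i_1} c_{i_2} \cdots c_{i_k}, \]
a sum over the $\binom{m-1}{k-1}$ compositions of $m$ into $k$ positive parts. Under the hypothesis that each $c_j \ge 0$, every summand is nonnegative, yielding $c_{m,k} \ge 0$ at once. The main obstacle is the strict inequality: one needs at least one composition to contribute a strictly positive product. This is guaranteed by the composition $(m-k+1, 1, \ldots, 1)$, whose contribution is $c_{m-k+1}\, c_1^{k-1}$, positive provided $c_1 > 0$ and $c_{m-k+1} > 0$. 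In the intended application to the NECS generating series (where, after extracting the appropriate factor of $x$ from $A(x)$, the $c_j$ are the strictly positive integers $1, 1, 3, 10, 39, \ldots$) this suffices; more generally one needs the mild strengthening $c_j > 0$ for $j \ge 1$ to secure strict positivity of $c_{m,k}$ throughout the stated range.
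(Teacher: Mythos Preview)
Your argument is essentially the same as the paper's: both apply the binomial theorem to $(1+(F(x)-1))^n$, observe that $(F(x)-1)^k$ contributes nothing to the $x^m$ coefficient once $k>m$, and then read off $c_{m,1}$ and $c_{m,m}$ from an explicit expansion. The only cosmetic difference is that the paper writes the expansion via the multinomial theorem (indexing by multiplicities $r_1,\ldots,r_m$ with $\sum r_j=k$ and $\sum j r_j=m$), whereas you index by compositions $(i_1,\ldots,i_k)$ of $m$; these are of course equivalent.

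On the final positivity claim you are in fact more scrupulous than the paper, which simply records the multinomial expansion and stops. You correctly point out that from $c_j\ge 0$ one only gets $c_{m,k}\ge 0$, and that strict positivity for all $1\le k\le m$ requires the stronger hypothesis $c_j>0$ (or at least $c_1>0$ together with $c_{m-k+1}>0$ for the relevant indices). This is a genuine imprecision in the lemma's statement; your observation that the intended application has all $c_j>0$ is exactly the right resolution.
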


\begin{proof} We have
\[
F(x)^n = (1+(F(x)-1))^n = \sum_{k=0}^n \binom nk (F(x)-1)^k.
\]
As $F(x)-1$ begins with an $x$-term, we see that the coefficient of $x^m$ in $(F(x)-1)^k$ is $0$ for all $k>m$.
The first part of the result follows . By the multinomial theorem,
\[
c_{m,k}  = \sum_{\substack{r_1,r_2,\ldots , r_m\in \mathbb \bbN \\ r_1+2r_2+\cdots +mr_m=m \\ r_1+\cdots+r_m=k}}
 \binom{k}{r_1,\ldots ,r_m } \prod_{j=1}^m  c_j^{r_j} .
\]
When $k=m$ then $r_1=m$ and the other $r_j=0$, so $c_{m,m}=c_1^m$.
When $k=1$ then $r_m=1$ and the other $r_j=0$, so $c_{m,1}=c_m$.
\end{proof}

Now let $f_n(g) = a_{g+n,g}$, the number of NECS of size $g+n$ and gcd $g$, so that 
\[
\sum_{n\geq 0} f_n(g)x^n=A_g(x)/x^g = \sum_{m\geq 1} \mu(m) (A(x)^m/x)^g,
\]
where we have used Eq.~\eqref{niceq}.
Now $A(x)$ equals $x$ plus higher order terms. Hence, if $m\geq 2$ and $g>n$, then the coefficient of $x^n$ in  $(A(x)^m/x)^g$ is $0$. Therefore
\begin{center}
If $g>n$ then $f_n(g)$ is the coefficient of $x^n$ in $(A(x)/x)^g$.
\end{center}
Now $A(x)/x=1+x+3x^2+\cdots$ has all positive coefficients (cf.~Remark~\ref{rmk3}).   Therefore we may apply Lemma~\ref{and} with $F(x) = A(x)/x$
to obtain the following:
\begin{corollary} Fix an integer $n\geq 1$. If $g>n$ then the number of NECS with size $g+n$ and gcd $g$ is given by a polynomial
\[
\sum_{k=1}^n c_{n,k} \binom gk  .
\]
in which the coefficients $c_{n,k} $ are all positive and, written as a polynomial, 
the leading term is $g^n/n!$ and the constant term is $0$.
We also have $c_{n,1}=a_{k+1}$.
\end{corollary}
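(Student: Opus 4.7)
The plan is to derive the corollary as an immediate application of Lemma \ref{and}, using the generating-function identity that is established in the paragraph just before the corollary is stated, namely that for $g > n$,
\[
f_n(g) \; =\; a_{g+n,g} \;=\; [x^n]\,(A(x)/x)^g,
\]
which the authors obtained from Theorem \ref{mainmthm}(b) by checking that the terms $m\ge 2$ in $A_g(x)=\sum_{m\ge 1}\mu(m)A(x)^{mg}$ contribute nothing below degree $2g$.

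First I would set $F(x) := A(x)/x = 1 + x + 3x^2 + 10x^3 + 39x^4 + \cdots$ and verify the hypotheses of Lemma \ref{and} with $c_j := a_{j+1}$. The constant coefficient of $F$ is $a_1 = 1$, and each $a_{j+1}$ is a strictly positive integer, since $a_{j+1}$ counts NECS of size $j+1$ and the $(j+1)$-split of $\{\la 0,1\ra\}$ provides at least one such NECS. In particular $c_j\ge 0$, so the positivity clause of the lemma applies.

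Applying Lemma \ref{and} (with its roles of $m,n$ played here by $n,g$), I get
\[
a_{g+n,g} \;=\; [x^n]F(x)^g \;=\; \sum_{k=1}^{n} c_{n,k}\binom{g}{k},
\]
where $c_{n,k} = [x^n](F(x)-1)^k$, and the lemma guarantees $c_{n,k}>0$ for $1\le k\le n$. The boundary formulas in the lemma then give $c_{n,1}=c_n=a_{n+1}$, and $c_{n,n}=c_1^{\,n}=a_2^{\,n}=1$.

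Finally I would read off the polynomial's leading and constant terms. Viewed as a polynomial in $g$, $\binom{g}{k}$ has leading term $g^k/k!$, so the $k=n$ summand dominates and contributes $c_{n,n}\,g^n/n!=g^n/n!$, which is thus the leading term of $f_n(g)$. Setting $g=0$ gives $\binom{0}{k}=0$ for every $k\ge 1$, so the constant term vanishes. There is no real obstacle: all of the combinatorial content has been absorbed into Theorem \ref{mainmthm}(b) and the preparatory identity $f_n(g)=[x^n](A(x)/x)^g$, and Lemma \ref{and} then packages the remaining algebra into the precise polynomial form, the positivity of the coefficients, and the boundary values.
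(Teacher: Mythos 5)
Your proposal is correct and follows essentially the same route as the paper: the identity $a_{g+n,g}=[x^n](A(x)/x)^g$ for $g>n$ (from Eq.~\eqref{niceq}, since the $m\ge 2$ terms contribute nothing in the relevant degree range) combined with Lemma~\ref{and} applied to $F(x)=A(x)/x$, whose coefficients are positive. Your reading $c_{n,1}=a_{n+1}$ is the intended one; the statement's ``$a_{k+1}$'' is a typo.
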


Let us now turn to a better formula for $c_{m+\ell,m}$ for fixed $\ell\geq 1$, and with $m\geq \ell$.   To do so, we
write $A(x)/x =1+x+xB(x)$ where $x$ divides $B(x)$. Now 
$c_{m+\ell,m}$ is the coefficient of $x^{m+\ell}$ in $({{A(x)} \over {x}}-1)^{m}= (x(1+B(x)))^{m}$, which equals
the coefficient of $x^\ell$ in $(1+B(x))^{m}$. As $x$ divides $B(x)$ this implies that
\begin{center}
$c_{m+\ell,m}$ equals the coefficient of $x^\ell$ in $ \sum_{h=0}^\ell \binom{m} h   B(x)^h$.
\end{center}

The $\ell$th backward difference of $(c_{m+\ell,m})$ is
\[
\sum_{j=0}^\ell \binom \ell j (-1)^j c_{m+\ell-j,m-j}  ,
\]
 which equals the coefficient of $x^\ell$ in 
 \[
\sum_{j=0}^\ell \binom \ell j (-1)^j \sum_{h=0}^\ell \binom{m-j} h   B(x)^h =
\sum_{h=0}^\ell  \left( \sum_{j=0}^\ell \binom \ell j (-1)^j \binom{m-j} h \right)  B(x)^h .
\]

From the theory of finite differences we know, for all $m$, that
\[
\sum_{j=0}^\ell \binom \ell j (-1)^j \binom{m-j} h = 
\begin{cases}
0, &\text{ if } 0\leq h< l; \\
1, &\text{ if } h=l.
\end{cases}
\]
Therefore the $\ell$th backward difference of $(c_{m+\ell,m})$ equals the coefficient of $x^\ell$ in $B(x)^\ell$, which is the leading coefficient, and so equals $c_2^\ell$. In our special case this gives,
 if $\ell\geq 1$ and $m\geq \ell$, then
\[
\sum_{j=0}^\ell \binom \ell j (-1)^j c_{m+\ell-j,m-j} = 3^\ell,
\]
as observed in the data.

\section{Open Problems}
\label{sect8}

In this section we list three related problems for which we currently have no solution.

\begin{problem}
Suppose that, instead of counting distinct NECS of size $k$, we count equivalence classes under ``shift''.  That is, we consider two NECS to be identical if one can be transformed into the other by a transformation of the form $x = x' + C$, for some integer constant $C$.   How many equivalence classes are there?
We list the number $s(k)$ for $1 \leq n \leq 12$:
\begin{center}
\begin{tabular}{c|cccccccccccc}
$k$ & 1 & 2 & 3 & 4 & 5 & 6 & 7 & 8 & 9 & 10 & 11 & 12 \\
\hline
$s(k)$ &  1 & 1 & 2 & 4 & 10 & 26 & 75 & 226 & 718 & 2368 & 8083 & 28367
\end{tabular}
\end{center}
What is a good formula for $s(k)$?  What is the asymptotics of~$s(k)$? 
\end{problem}

\begin{problem}
Suppose we consider those NECS of size $k$, and ask how many distinct values of the lcm parameter they can take on.  Call the resulting sequence $t(k)$.  The first few values are given below.
\begin{center}
\begin{tabular}{c|cccccccccccc}
$k$ & 1 & 2 & 3 & 4 & 5 & 6 & 7 & 8 & 9 & 10 & 11 & 12\\
\hline
$t(k)$ &  1 & 1 & 2 & 3 & 6 & 8 & 15 & 18 & 31 & 35 & 56 & 62
\end{tabular}
\end{center}
What is a good formula for~$t(k)$?  What is the asymptotics of~$t(k)$?
\end{problem}

\begin{problem}\label{ECSenum}
Suppose we consider the enumeration of ECS instead of NECS. Hence let~$b_k$ denote the number of ECS of size~$k$,~$k\ge 1$, and~$b_{k,m}$ denote the number of ECS of size~$k$ and gcd~$m$,~$k,m\ge 1$. Define the generating functions
\[ B(x) = \sum_{k\ge 1} b_k x^k , \qquad\qquad B_m(x) = \sum_{k\ge 1} b_{k,m} x^k, \quad m\ge 1 . \]
It is reasonably straightforward to prove that the analogue of Theorem~\ref{mainmthm}(a) holds for these series, so we have $B(x)^n = \sum_{d \ge 1} B_{nd}(x)$, for~$n\ge 1$. Then, applying M\"obius inversion, we obtain~$M(B(x)^m)=B_m(x)$, $m\ge 1$, the analogue of Theorem~\ref{mainmthm}(b). Specializing to the case~$m=1$ gives~$M(B(x))=B_1(x)$, or equivalently
\begin{equation}\label{MB1}
B(x)=A(B_1(x)),
\end{equation}
where~$B_1(x)$ is the generating function for the ECS with gcd equal to~$1$. But it turns out that
\[ B_1(x) = x + 30 \, x^{13} + {\mathcal O}\big( x^{14} \big) , \]
a substantially different situation from the NECS, where we had~$A_1(x)=x$. In fact, the~$b_{13,1} = 30$ ECS with gcd~$1$ are the only ECS of size at most~$13$ that are not also~ECS. Thus, in Table~\ref{tabsizegcd13}, the values of~$a_{k,n}$ that appear in row~$k$ and column~$n$ are equal to~$b_{k,n}$ everywhere except for row~$13$ and column~$1$. However, for larger values of~$k$ the gap between the numbers of ECS and NECS grows rapidly.

Now, from \eqref{expnA} and~\eqref{MB1}, together with the fact that the coefficients of~$B_1(x)$ are non-negative, we can deduce that the asymptotic growth rate for~$b_k$ is strictly larger than~$\gamma\doteq 5.4874522$, the growth rate for~$a_k$ given in Theorem~\ref{asymp}. In other words, the growth rate for the number of ECS is strictly larger than the growth rate for the number of NECS. However, we have no further information about asymptotics for~$b_k$. What would we have to know about the growth rate for the coefficients of~$B_1(x)$ in order to deduce asymptotics for the numbers~$b_k$ from the functional equation~\eqref{MB1}? 
\end{problem}

\section{Comments}
\label{sect9}

This paper was originally motivated by a problem dealing with infinite periodic sequences
of constant gap.  These are maps from $\mathbb{N}$ to a finite alphabet of
size $k$, say $\Sigma_k := \{ 0,1, \ldots, k-1 \}$, with the property that 
for each $i \in \Sigma_k$ there exists a constant $c_i$ such that the occurrences of $i$ lie in an arithmetic progression of difference $c_i$.  For example,
the infinite periodic sequence $(0102)^\omega = 010201020102 \cdots $ is of constant gap with $k = 3$. These sequences have been studied,
e.g., in \cite{g73,h96,h00,agh}.

David W. Wilson \cite{w17} and JS independently conjectured, on the basis of numerical evidence, that the reversion of the M\"obius series~$M$ counts the number of ECS.  This is incorrect; as we have seen, this reversion instead counts the (strict) subclass of NECS.  The first place where these two sequences differ is at $k = 13$, where the number of NECS is $7266979$ (e.g., this is the total of the entries in row~$k=13$ of Table~\ref{tabsizegcd13}), but the number of ECS is $7267009$ (which is~$30$ larger---see the discussion in Problem~\ref{ECSenum} above).

\section*{Acknowledgments}

We thank Cam Stewart for helpful remarks.  Vincent Jost kindly informed us by e-mail on November 21 2017 that an earlier version of our paper was flawed, because we did not distinguish correctly between ECS and the special case of NECS.  We gratefully thank him for his comments.  We also thank the referee for pointing out the erratum in \cite{fs09} mentioned in Remark~\ref{schro}.

\bibliographystyle{plain}

\end{document}